\documentclass[twoside]{article}

\usepackage{amsmath}
\usepackage{amscd}
\usepackage{graphics}
\usepackage{epsfig}
\usepackage{url}

\usepackage{pgf}
\usepackage{tikz}
\usetikzlibrary{arrows,shapes,er,automata,backgrounds,topaths,trees}

\usepackage{mor}

\newcommand*{\CopyCounter}[2]{%
  \expandafter\def\csname c@#2\endcsname{\csname c@#1\endcsname}%
  \expandafter\def\csname p@#2\endcsname{\csname p@#1\endcsname}%
  \expandafter\def\csname the#2\endcsname{\csname the#1\endcsname}}


\newcounter{Observation}
\numberwithin{Observation}{section}
\newtheorem{observ}[Observation]{Observation}

\newcommand{\bm}[1]{\mbox{\boldmath${#1}$}}
\newcommand{\Assume}[1]{{\bf (A#1)}}

\newcommand{\domain}{\Omega}
\newcommand{\cdomain}{\widebar{\Omega}}
\newcommand{\boundary}{\partial \domain}

\newcommand{\xbar}{\bm{\bar{x}}}

\newcommand{\xtilde}{\bm{\tilde{x}}}

\newcommand{\bx}[1]{\mbox{\boldmath$x_{#1}$}}

\newcommand{\x}{\bm{x}}
\newcommand{\T}{\bm{t}}

\newcommand{\ba}{\bm{a}}

\newcommand{\y}{\bm{y}}
\newcommand{\z}{\bm{z}}
\newcommand{\e}{\bm{e}}

\newcommand{\widebar}{\overline}
\newcommand{\R}{\bm{R}} 

\newcommand{\J}{{\cal{J}}} 
\newcommand{\Cost}{\J}

\newcommand{\M}{{\cal{M}}} 
\newcommand{\XX}{{X \backslash \{ \T \}}}

\DeclareMathOperator*{\argmin}{argmin}

\received{}                              %
\revised{}                               %
\pubyear{0x}                             %
\pubmonth{Xxxxxxx}                       %
\volume{xx}                              %
\issue{x}                                %
\pages{xxx--xxx}                         %
\firstpage{0xxx}                         %
\DOI{10.1287/moor.xxxx.xxxx}             %
\startpagenumber{1}                      %


\title{Label-setting methods for Multimode Stochastic Shortest Path problems on graphs.}
\ShortTitle{Label-setting methods for MSSP}
\ShortAuthors{Vladimirsky}
\NumberOfAuthors{1}
\FirstAuthor{Alexander Vladimirsky}
\FirstAuthorAddress{Department of Mathematics, Cornell University, Ithaca, NY 14853}
\FirstAuthorEmail{vlad@math.cornell.edu}
\FirstAuthorURL{http://www.math.cornell.edu/~vlad/}

\keywords{ stochastic shortest path; 
dynamic programming;
label-setting;
Dijkstra's method;
Dial's method;
optimal control;
Hamilton-Jacobi PDEs;
Fast Marching Method
 }

\MSCcodes{
90C39,
90C40,
49L20, 
65N22,
35B37;
93E20,
65K05,
49L25 
} 

\ORMScodes{Primary: dynamic programming; 
Secondary: Markov finite state} 

\begin{document}
\maketitle

\begin{abstract}
Stochastic shortest path (SSP) problems arise in a variety
of discrete stochastic control contexts.
An optimal solution to such a problem is typically
computed using the value function, which can be found by solving
the corresponding dynamic programming equations.
In the deterministic case, these equations can 
be often solved by the highly efficient label-setting methods
(such as Dijkstra's and Dial's algorithms).
In this paper
we define and study a class of Multimode Stochastic Shortest Path
problems and develop sufficient conditions for the applicability
of label-setting methods.  We illustrate our approach on a number
of discrete stochastic control examples.  We also discuss
the relationship of SSPs with discretizations of static 
Hamilton-Jacobi equations and provide an alternative derivation
for several fast (non-iterative) numerical methods for these PDEs.
\end{abstract}
\normalsize

\section{Introduction.}
\label{s:intro}

Stochastic Shortest Path (SSP) problems constitute a large class 
of Markov Decision Processes and their accurate and efficient
solution is important for numerous applications including 
mathematical finance, 
optimal resource allocation,
design of discrete-time risk-sensitive controls, and 
controlled queuing in communication networks. 
Our goal in this paper is to study the conditions under which 
an important subclass of SSPs
can be solved by efficient label-setting methods.

In SSP the current state of the system at the 
$k-$th stage is $\y_k$, an  element in a finite 
state space $X= \{\x_1, ... , \x_M, \T = \x_{M+1}  \}$. 
At the next stage, $\y_{k+1}$ is
a random variable, whose probability distribution on $X$
depends on $\y_k$ and on the decision made (control value chosen) 
at the previous stage.  The process terminates upon reaching a special
target state $\T$.  At each stage, our choice of control determines
the incurred cost and the overall goal is to minimize the ``value function''
(i.e., the expected value of the total accumulated cost up to the termination).
We provide a formal description of SSP in section \ref{s:SSP_general}; 
here we simply note
that the dynamic programming approach yields a system of $M$ coupled
nonlinear equations for the value function.  Under mild technical assumptions
this system has a solution, which can be found by ``value iteration''.
However, since these iterations are performed in $\R^M$, this can be quite
costly, especially considering the fact that infinitely many steps
are generally needed for convergence.

On the other hand, SSPs can be considered as a generalization 
of classical deterministic shortest path (SP) problems on directed graphs,
for which there is a variety of well-understood efficient algorithms.
In particular, non-iterative {\em label-setting} methods are applicable
provided the transition-costs in the graph are non-negative.
If a constant $\kappa << M$ is an upper bound on outdegrees,
Dijkstra's method \cite{Diks} and Dial's method \cite{Dial}
solve the deterministic dynamic programming equations
in $O(M \log M)$ and $O(M)$ operations respectively.
We provide a brief overview of these methods in section \ref{ss:LS_determ}, 
but here we simply note that both methods hinge on the {\em absolute causality}
present in a deterministic problem:  the fact that the value function is 
decreasing along every optimal path to $\T$.

Thus, to build similar methods for SSPs, one needs to find similar
causal properties in the stochastic problem.  
In fact, Bertsekas showed that a Dijkstra-like method will
correctly compute the value function of an SSP if 
there exists a {\em consistently improving optimal policy}
\cite[Vol. II, p.98]{Bertsekas_DPbook}.
In Section \ref{ss:LS_general_SSP} we define a similar
notion of a {\em consistently $\delta$-improving optimal policy},
which guarantees the applicability of a Dial-like method.
Unfortunately, both of these criteria are implicit since 
the existence of such optimal policies is generally not
known a priori. 

The main contribution of this paper is the development of explicit 
conditions on transition cost function(s), which guarantee the existence 
of consistently improving and/or consistently $\delta$-improving 
optimal policies for a large class of {\em Multimode SSPs}.
The exact class of SSPs that we consider is 
formally defined in Section \ref{s:MSSP},
but generally our criteria apply provided
\begin{enumerate}
\item
each state $\x \in X$ has a collection of ``modes'' $m_1(\x), \ldots, m_r(\x)$ -- 
(possibly overlapping) subsets of $X$;
\item
each individual control is restricted to one of the modes 
(i.e., has non-zero transition probabilities only 
into the states available in that mode);
\item
there exists an available control corresponding to
each possible probability distribution over the states in each mode;
\item
the control-cost is defined for each mode separately
as a continuous function of the corresponding probability distribution
over the states in that mode.
\end{enumerate}
In this setting, it is natural to interpret the decision made at 
each stage as a deterministic choice among the modes of $\y_k$
plus the choice of a desirable probability distribution for 
the transition to one of the states in that mode.

This class obviously includes the classical SP problem
(when each mode contains only one possible successor-state).
More interestingly, it includes the problem of selecting optimal
randomized/mixed controls for deterministic shortest path problems,
when such randomized/mixed controls might be available at a discount.
In section \ref{ss:MSSP_modeling} we consider several 
representative examples
and discuss the differences between {\em explicitly causal} problems
(where the causality stems from a particularly simple structure
of transition probabilities) and {\em absolutely causal} problems 
(where the applicability of label-setting methods stems from certain
properties of transition costs, as derived in section \ref{ss:cost_criteria}).

The Multimode SSPs also include (but are not limited to)
the Markov chain approximations of deterministic continuous optimal 
trajectory problems.  (E.g., consider a vehicle starting
at some point $\x$ inside the domain $\domain \subset \R^n$, 
which is controlled to minimize the time needed to reach the boundary
$\boundary$.)  The value function for such problems is 
typically found as a viscosity solution of a static 
first-order Hamilton-Jacobi PDE.
It is well-known that semi-Lagrangian discretizations of that PDE
(similar to those in \cite{Falcone_InfHor} and \cite{GonzalezRofman}) 
can also be obtained from controlled Markov processes on 
the underlying grids.  This approach was pioneered by Kushner and Dupuis
\cite{KushnerDupuis} to design approximation schemes for deterministic
and stochastic continuously-controlled processes.  Recent extensions 
include higher-order approximations  \cite{SzpiroDupuis} and methods 
for stochastic differential games \cite{Kushner_games}.  
The resulting systems of equations are typically treated iteratively,
but relatively recently provably convergent label-setting 
algorithms were introduced for several important subclasses.  
For the isotropic 
case (when the vehicle's speed depends only on its current position in 
$\domain$ and is independent of the chosen direction of motion), 
the corresponding PDE is Eikonal.  In 1994 Tsitsiklis introduced
the first Dijkstra-like and Dial-like methods for semi-Lagrangian 
discretizations of this PDE on a uniform Cartesian grid 
\cite{Tsitsiklis_conference, Tsitsiklis}.  
The family of Dijkstra-like Fast Marching Methods, 
introduced by Sethian in \cite{SethFastMarcLeveSet}
and extended by Sethian and co-authors in
\cite{SethSIAM, KimmSethTria, SethVlad1}, was developed for Eulerian upwind 
discretizations of the Eikonal PDE in the context of 
isotropic front propagation problems.
A detailed discussion of similarities and differences of
these approaches can be found in \cite{SethVlad3}.
More recently, another Dial-like method for the Eikonal PDE on a uniform grid
was introduced in \cite{KimGMM}.  
For the anisotropic case,
the resulting semi-Lagrangian discretization typically 
does not have that causal property and the label-setting
methods are not directly applicable.
The label-setting Ordered Upwind Methods \cite{SethVlad2, SethVlad3} 
circumvent this difficulty; the key idea behind them can be interpreted 
as ``modifying the computational stencil on-the-fly to ensure the causality''.  
In the appendix of \cite{SethVlad3} we also demonstrated that the
causality is present for the first-order semi-Lagrangian discretizations
of the Eikonal PDE on arbitrary acute meshes.

In all of the above cases the proofs of causality heavily relied
on a geometric interpretation of the problem (e.g., a discretization of 
a particular PDE on a specific grid or mesh in $\R^n$).
In contrast, we first demonstrate that the applicability of
label-setting methods to Multimode SSPs
can be proven even if no geometric interpretation is available
(Section \ref{s:MSSP}).
We then show that the absolute causality of prior numerical methods
for the Eikonal PDE 
can be easily re-derived from the more general criteria introduced in here.
In addition, our formulation yields two new results
for deterministic continuous optimal trajectory problems  (Section \ref{s:HJB_discr}):\\
$\bullet \,$ a formula for the bucket-width in a Dial-like method for Eikonal PDEs on acute meshes;\\
$\bullet \,$ an applicability criterion for the label-setting techniques in anisotropic 
optimal control problems.\\
Finally, in Section \ref{s:conclusions} we discuss the limitations
of our approach and list several related open problems.

\section{Stochastic Shortest Path Problem.}
\label{s:SSP_general}

Typically SSP is described on a directed graph  
with nodes $X = \{\x_1, ... , \x_M, \T = \x_{M+1}  \}.$  Our exposition here 
closely follows the standard setting 
described in \cite{Bertsekas_DPbook}.

For each $\bx{i}$ the problem specifies a compact set of allowable 
controls $A_i = A(\x_i)$.  
If $\bx{i}$ is the current state of the process (i.e., if $\y_k = \x_i$), 
then our choice of a control value $\ba \in A_i$ determines the cost of the next transition 
$C(\bx{i}, \ba)$ as well as the probability of transition into each node $\bx{j}$:
$$
p(\x_i, \x_j, \ba) = p_{ij}(\ba) = 
P \left( \y_{k+1} = \x_j \, \mid \, 
\y_k = \x_i, \text{ and the chosen control is } \ba \in A_i \right).
$$  
A class of problems where the transition cost $C(\x_i, \ba, \x_j)$ 
also depends on the resulting successor node $\x_j$ can also be
handled in the same framework by defining
$C(\x_i, \ba) = \sum\limits_{j = 1}^{M+1} p_{ij}(\ba) C(\x_i, \ba, \x_j).$
It is assumed that the cost is accumulated until we reach the  
{\em absorbing target} $\T$, i.e., 
$p_{tt}(\ba) = 1$ and $C(\T, \ba) = 0$ for $\forall \ba \in A_t$.

Consider the class of control mappings 
$\mu: X \mapsto \left(\bigcup\limits_{i=1}^M A_i \right)$ such that
$\mu(x_i) \in A_i$ for all $\x_i \in X$.  A {\em policy} is an infinite sequence of
such mappings $\pi = (\mu_0, \mu_1, \ldots).$  A {\em stationary policy} is a policy of 
the form $(\mu, \mu, \ldots)$ and for the sake of brevity we will also refer to it 
as ``the stationary policy $\mu$''.

If the process starts at $\x \in X$ (i.e., $\y_0 = \x$), the expected cost of using 
a policy $\pi = (\mu_0, \mu_1, \ldots)$ is defined as
$$\Cost(\x, \pi) = E \left( \sum\limits_{k = 0}^{\infty} C(\y_k, \mu_k(\y_k)) \right).$$
The value function is then defined as usual 
$U_i = U(\x_i) = \inf\limits_{\pi} \Cost(\x_i, \pi)$, and a 
policy $\pi^*$ is called {\em optimal} provided 
$U(\x_i) = \Cost(\x_i, \pi^*)$ for all $\x_i \in X$.

If the value function $U(\x_i)$ is 
finite, it should satisfy the
dynamic programming equations:  $U(\T) = 0$ and
\begin{equation}
\label{eq:Markov_general}
U_i \; = \; \inf\limits_{\ba \in A_i} 
\left\{ 
C(\x_i, \ba) \, + \, 
\sum\limits_{j = 1}^{M+1}  p_{ij}(\ba) U_j 
\right\}, \qquad \text{for } \forall \x_i \in \XX.
\end{equation}

\noindent
An operator $T$ is defined on $\R^M$ component-wise by applying 
the right hand side of equation (\ref{eq:Markov_general}); i.e., for
any $W \in \R^M$
\begin{equation}
(TW)_i \; = \; \inf\limits_{\ba \in A_i} 
\left\{ 
C(\x_i, \ba) \, + \, 
\sum\limits_{j = 1}^{M+1}  p_{ij}(\ba) W_j 
\right\}.
\label{eq:generic_value_it_component}
\end{equation}
Clearly, 
$U = 
\left[
\begin{array}{c}
U_1\\
\vdots\\
U_M
\end{array}
\right]
$
is a fixed point of $T$ and one hopes to recover $U$ 
by {\em value iteration}:

\begin{equation}
W^{n+1} \, := \, T \, W^n \qquad
\text{ starting from an initial guess $W^0 \in \R^M$.}
\label{eq:generic_value_it}
\end{equation}
However, $T$ generally is not a contraction unless 
all stationary policies are known to be proper \cite{BerTsi_NDP}.

In \cite{BertsTsi} Bertsekas and Tsitsiklis demonstrated 
the existence of a stationary optimal policy, the uniqueness
of the fixed point of $T$, and that $W^n \rightarrow U$
for arbitrary $W^0 \in \R^M$ under the following 
assumptions:

\noindent 
$\bullet \,$
\Assume{0} All $C(\bx{i}, \ba)$ are lower-semicontinuous and all 
$p_{ij}(\ba)$ are continuous functions of controls $\ba$.

\noindent 
$\bullet \,$
\Assume{1} There exists at least one {\em proper policy} (i.e., a policy, which 
reaches the target $\T$ with probability 1 regardless of the initial state 
$\x \in X$).

\noindent 
$\bullet \,$
\Assume{2} Every improper policy $\pi$ will have cost $\Cost(\x, \pi) = + \infty$ for 
at least one node $\x \in X$.

\vspace*{1mm}
\noindent
\Assume{0} and the compactness of control sets $A_i$
allow us to replace ``$\inf$'' with ``$\min$'' in formulas
(\ref{eq:Markov_general}) and (\ref{eq:generic_value_it_component}).
\Assume{1} corresponds to a graph connectivity assumption in the deterministic case
while \Assume{2} is a stochastic analog of requiring all cycles to have positive cumulative
penalty.  \Assume{2} also follows automatically if 
$$
\underline{C} \; = \;
\min\limits_{\x \in \XX, \, \ba \in A(\x)} \;  C(\x, \ba) \quad > \quad 0.
$$ 

The convergence of value iteration provides a way for computing $U$,
but generally that convergence does not occur after any finite number of iterations
(for a simple example, see Figure \ref{fig:_infinite_VI}).
Some error bounds are available, but typically in an implicit form only 
\cite[Vol. I, Section 7.2]{Bertsekas_DPbook}.
A recent work by Bonet \cite{Bonet} provides a polynomial upper bound on the number
of value iterations required to achieve a prescribed accuracy for the case 
when the ratio $(\|U\|_{\infty} \, / \, \underline{C})$ 
is a priori known to be polynomially bounded.

\begin{figure}[hhhh]
\center{
\begin{tikzpicture}[->,>=stealth',shorten >=1pt,auto,node distance=3cm,
                    semithick]
  \tikzstyle{every state}=[draw, shape=circle]

  \node[state]         (A)							{$\x_1$};
  \node[state]         (C) [right of=A]		{$\T$};
  \node[state]         (B) [right of=C]		{$\x_2$};
  
  \path (A) edge	[bend left=45]		node {$p_{12}=1/2$} (B)
            edge						node {$p_{1t}=1/2$} (C)
        (B) edge	[bend left=45]		node {$p_{21}=1/2$} (A)
            edge	[swap]				node {$p_{2t}=1/2$} (C);
\end{tikzpicture}
}
\caption{
{\footnotesize
A simple example with only one available control at every node;
the transition probabilities are indicated above and the control cost is 
$C>0$.
By the symmetry it is clear that $U_1 = U_2 = u$ and $u = C + \frac{1}{2}(u + 0)$;
thus, $u = 2C$.  At the same time, the generic value iteration described by formula
(\ref{eq:generic_value_it}) will not converge after any finite number of steps
unless $W^0 = U$.
}
}
\label{fig:_infinite_VI}
\end{figure}
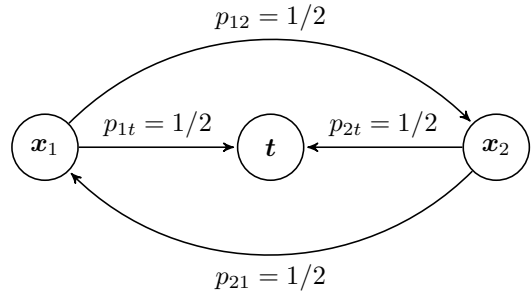

\subsection{Label-setting methods: the deterministic case.}
\label{ss:LS_determ}
Fast methods for deterministic discrete control problems 
(e.g., searching for a shortest path in a graph or a network) 
can be found in all standard references
(e.g., \cite{Ahuja}, \cite{Bertsekas_NObook}) and we provide 
a brief overview just for the sake of completeness.
The dynamic programming equations
are much simpler in this case: $U(\T) = 0$ and
\begin{equation}
\label{eq:Optimality_Determ}
U_i \; = \; \min\limits_{\bx{j} \in N(\bx{i})} \left\{C_{ij} + U_j\right\},
 \qquad \text{for } \forall \x_i \in \XX
\end{equation}
where $U(\bx{i}) = U_i$ is the min-cost-to-exit starting from $\bx{i}$,
$N(\bx{i})$ is the set of nodes to which $\bx{i}$ is connected, and
$C_{ij} = C(\bx{i}, \bx{j})$ is the cost of traversing the corresponding link.
In the absence of negative cost cycles and if every $\x_i$ is connected by some path
to $\T$, the value function is finite and well-defined.
Value iteration (\ref{eq:generic_value_it}) will converge to $U$ after at most $M$ iterations
resulting in a $O(M^2)$ computational cost.

Label-setting methods provide a better alternative if a suitable lower bound on 
the transition costs is available.  These methods reorder the iterations over 
the nodes to guarantee that each $U_i$ is recomputed at most 
$\kappa$ times, where 
the constant upper bound on outdegrees $\kappa$ 
is assumed to be much smaller than the total number of nodes $M$.
For example, Dijkstra's classical algorithm \cite{Diks} is a label-setting method
for the deterministic case provided  all $C_{ij} \geq 0$.
The idea is based on the causality of the system (\ref{eq:Optimality_Determ}): 
\begin{equation}
\label{cond:causality_determ}
U_i \text{ may depend on $U_j$ only if } \; U_i \geq U_j.
\end{equation}  
Such an ordering is not known in advance and 
has to be obtained at run-time.  
The method subdivides $X$ into two classes: ``permanently labeled nodes'' $P$
and ``tentatively labeled nodes'' $L$ and the values for $\x_i$'s in $L$ are 
successively re-evaluated using only the adjacent values already in $P$: 
\begin{equation}
\label{eq:temp_labels_Determ}
U(\x_i) \; := \; \min\limits_{\bx{j} \in \tilde{N}(\bx{i})} \left\{C_{ij} + U_j\right\},
 \qquad \qquad \text{for }  \x_i \in L,
\end{equation}
where $\tilde{N}(\bx{i}) = N(\x_i) \cap P$.
The algorithm is initialized by placing all nodes into $L$ and 
setting $U(\T) = 0$ and $U(\x_i) =  + \infty$
for $i = 1, \ldots, M$.
At each stage the algorithm chooses the smallest of tentative labels $U(\xbar)$,
``accepts'' $\xbar$ (i.e., moves $\xbar$ from $L$ to $P$), and
re-evaluates $U_i$ for each $\x_i \in L$ such that $\xbar \in N(\x_i).$
Since $\xbar$ is the only new element in $\tilde{N}(\bx{i})$, that re-evaluation
can be more efficiently performed by setting
\begin{equation}
\label{eq:temp_labels_local_Determ}
U(\x_i) \; := \; \min \left\{ U(\x_i), \left( C(\x_i, \xbar) + U(\xbar) \right) \right\}.
\end{equation}
The algorithm terminates once the list $L$ is empty, at which
point the vector $U \in \R^M$ satisfies the system of equations 
(\ref{eq:Optimality_Determ}). 
The necessity to sort all (finite) temporary labels  
dictates the use of heap-sort data structures \cite{Ahuja}, usually resulting
in the overall computational complexity of $O(M \log M)$.  

In addition, if all $C_{ij} \geq \Delta > 0$, then Dial's label-setting method is also 
applicable \cite{Dial}. The idea is to avoid sorting temporarily labeled nodes and instead 
place them into ``buckets'' of
width $\Delta$ based on their tentative values.  If $U(\xbar)$ is the ``smallest'' of
tentative labels and $U(\x)$ is currently in the same bucket, then
even after $\xbar$ is permanently labeled, it cannot affect $U(\x)$ since
$$U(\x)  < U(\xbar) + \Delta \leq U(\xbar) + C(\x, \xbar).$$
Thus, a typical stage of Dial's method consists of 
``accepting'' (or declaring labels to be permanent) for
everything in the current bucket, recomputing all nodes in $L$ adjacent to 
those newly labeled permanent, switching them to other buckets if warranted by the 
new tentative labels, and then moving on to the next bucket.   
Since inserting to and deleting from a bucket
can be performed in $O(1)$ time, the overall computational complexity of Dial's method
becomes $O(M)$.  In addition, while Dijkstra's approach is inherently sequential,
Dial's method is naturally parallelizable.  Many other enhancements of the above 
label-setting methods are available in the literature (e.g., see 
\cite{Bertsekas_NObook} and references therein).  Most of those enhancements can be
also used with the label-setting of SSP -- provided the basic versions of the above 
algorithms are applicable.

\subsection{Label-setting methods: the general SSP.}
\label{ss:LS_general_SSP}
Given a stationary policy $\mu$ for a general SSP, 
we can construct its directed {\em dependency graph}
$G_{\mu}$ using the nodes $\XX$ and 
connecting $\x_i$ to $\x_j$ if $p_{ij}(\mu(x_i)) > 0$.
Assuming \Assume{0}, \Assume{1} and \Assume{2},
it is easy to show that the value iteration for this problem
converges after at most $M$ iterations provided there exists 
an optimal stationary policy $\mu^*$ such that 
$G_{\mu^*}$ is acyclic.  
(See \cite[Vol. II, Section 2.2.1]{Bertsekas_DPbook}).
We will refer to such SSPs as {\em causal}.

\begin{remark}
\label{rem:self_transition}
This condition seems to forbid any self-transitions
(e.g., $p_{ii} (\mu^*(x_i)) > 0$ for $\forall \x_i \neq \T$), but an SSP
with self-transitions can be converted into an SSP without them by setting
$$
\tilde{C}(\x_i, \ba) \, = \, \frac{C(\x_i, \ba)}{1 - p_{ii}(\ba)};
\qquad \text{and} \qquad
\tilde{p}_{ij}(\ba) \, = \,
\begin{cases}
0 & \text{if } i = j,\\
\frac{p_{ij}(\ba)}{1 - p_{ii}(\ba)}
& \text{if } i \neq j;
\end{cases}
\qquad
\text{for } 
\begin{array}{l}
i=1,\cdots,M;\\
j=1,\cdots,M+1.
\end{array}
$$
\end{remark}

\begin{remark}
\label{rem:explicit_causality}
One obvious set of causal SSPs consists of all problems where
the dependency graph is acyclic for {\em every} stationary policy.
The SSP belongs to this class if and only if, for $\forall \x_i,\x_j \in \XX$,
$$
\exists \mu_1 \text{ s.t. there is a path from } \x_i \text{ to } \x_j \text{ in } G_{\mu_1}
\qquad
\Longrightarrow
\qquad
\not \exists \mu_2 \text{ s.t. there is a path from } \x_j \text{ to } \x_i \text{ in } G_{\mu_2}.
$$
We will refer to such problems as {\em explicitly causal}
(see examples \ref{ex:aux1} and \ref{ex:multitask} in the later sections).
Such explicit causality is independent of the cost function
and can be determined based on the available controls and the transition 
probabilities alone.  The above property imposes a partial 
ordering on $X$; using that partial ordering to go through the nodes,
we will clearly have the value function computed correctly
in a single sweep, yielding the computational complexity of $O(M)$.
Thus, the applicability of label-setting methods described below
is only important for SSPs which are causal, but not explicitly causal.
This is similar to the fact that the original Dijkstra's method is 
not needed to solve the deterministic SP problem on any acyclic digraph.
\end{remark}

\noindent
According to the definition introduced by Bertsekas in \cite{Bertsekas_DPbook},
an optimal stationary policy $\mu^*$ is {\em consistently improving} if
\begin{equation}
\label{def:improving}
p_{ij}(\mu^*(\x_i)) > 0 \qquad \Longrightarrow \qquad U_i > U_j.
\end{equation}
This is a stochastic equivalent of the causality condition 
(\ref{cond:causality_determ}).
Thus, the existence of such $\mu^*$ not only guarantees that 
$G_{\mu^*}$ is acyclic, but also allows us
to avoid the value iteration process altogether 
since $U_i$'s can be computed by a non-iterative Dijkstra-like method instead. 

If a consistently improving optimal policy is known to exist,
the new equivalent of the causal update equation (\ref{eq:temp_labels_Determ}) 
for each $\x_i \in L$ is now
\begin{equation}
\label{eq:temp_labels_SSP}
U(\x_i) \; := \; \min\limits_{\ba \in \tilde{A}(\x_i)} 
\left\{ 
C(\x_i, \ba) \, + \, 
\sum\limits_{j = 1}^{M+1}  p_{ij}(\ba) U_j 
\right\},
\end{equation}
where $\tilde{A}(\x_i)$ is the set of controls, which make transition
possible to permanently labeled nodes only, i.e.,
$\tilde{A}(\x_i) = 
\{ \ba \in A(\x_i) \mid p_{ij}(\ba) = 0 
\text{ for all } \x_j \not \in P \}$.
Once $\xbar$ is moved from $L$ to $P$, each $\x_i \in L$ needs to be
updated only if the set 
$$
\tilde{A}(\x_i, \xbar) \; = \;
\{ \ba \in \tilde{A}(\x_i) \, \mid \, p(\x_i, \xbar, \ba) > 0 \}
$$
is not empty.
Finally, the new equivalent of the efficient update formula
(\ref{eq:temp_labels_local_Determ}) is now 
\begin{equation}
\label{eq:temp_labels_local_SSP}
U(\x_i) \; := \; \min \left\{ \; U(\x_i), \;
\min\limits_{\ba \in \tilde{A}(\x_i, \xbar)} 
\left\{ 
C(\x_i, \ba) \, + \, 
\sum\limits_{j = 1}^{M+1}  p_{ij}(\ba) U_j 
\right\}
\;
\right\}.
\end{equation}
If a constant $\kappa << M$ is an upper bound on stochastic outdegrees
(i.e., if for $i=1, \ldots, M$ we have $\kappa \geq $ the total
number of nodes $\x_j$ for which $\exists \ba \in A(\x_i)$ such that 
$p_{ij}(\ba) > 0$), then a Dijkstra-like algorithm described above
will have a computational cost of $O(M \log M)$.  Upon its termination,
the resulting $U \in R^M$ will satisfy the system of equations
(\ref{eq:Markov_general}).  The proof of this fact is straight-forward 
and is listed as one of the exercises in \cite{Bertsekas_DPbook}.

\noindent
Here we introduce a similar definition:\\
Given $\delta \geq 0$,
an optimal stationary policy $\mu^*$ is {\em consistently $\delta$-improving} if
\begin{equation}
\label{def:delta_improving}
p_{ij}(\mu^*(\x_i)) > 0 \qquad \Longrightarrow \qquad U_i > U_j + \delta.
\end{equation}
When $\delta > 0$,
it is similarly easy to show that the existence of a consistently 
$\delta$-improving optimal policy guarantees the convergence of a 
Dial-like method with buckets of width $\delta$
to the value function of the SSP.  As in the deterministic case, 
for $\kappa << M$, the resulting cost will be $O(M)$.
Every consistently $\delta$-improving policy is obviously also
consistently improving;
when $\delta = 0$, this reduces to the previous definition.

Unfortunately, conditions (\ref{def:improving}) and 
(\ref{def:delta_improving}) are implicit
since no optimal policy is a priori known.
Thus, for a general SSP applicability of label-setting methods
is hard to check in advance. It is preferable (and more practical)
to develop conditions based on
functions $C(\x_i, \ba)$ and $p(\x_i, \x_j, \ba)$,
which would guarantee that {\em every optimal policy} 
is consistently improving (or $\delta$-improving).
We will refer to such SSPs as 
{\em absolutely $\delta$-causal} 
(or simply {\em absolutely causal} when $\delta=0$).
Before developing such explicit conditions for 
a particular class of Multimode-SSPs in section \ref{s:MSSP}, 
we make several remarks about the general case.

\begin{remark}[{\em Consistently almost improving policies}] \mbox{ }\\
\label{rem:almost_improving}
Comparing the deterministic causality condition (\ref{cond:causality_determ})
with the condition (\ref{def:improving}), it might seem that a Dijkstra-like 
method should work whenever there exists a ``consistently {\em almost} improving
optimal policy'' , i.e., an optimal $\mu^*$ such that
$$
p_{ij}(\mu^*(\x_i)) > 0 \qquad \Longrightarrow \qquad U_i \geq U_j.
$$
A simple example in Figure \ref{fig:_infinite_VI} demonstrates that this is false.
Indeed, for this example a Dijkstra-like method would terminate with
$U_1 = U_2 = +\infty$
even though the optimal policy is consistently  almost improving
and the correct value function is $U_1 = U_2 = 2C$. 
\end{remark}

\begin{remark}[{\em Lower bounds on control cost}] \mbox{ }\\
\label{rem:cost_lower_bound}
If $\mu^*$ is an optimal policy and 
$\ba^* = \mu^*(\x_i)$, then 
$$
C(\x_i, \ba^*) \; = \;
U_i - \sum\limits_{j = 1}^{M+1}  p_{ij}(\ba^*) U_j 
 \; = \;
\sum\limits_{j = 1}^{M+1}  p_{ij}(\ba^*) (U_i - U_j).
$$ 
This means that $C(\x_i, \ba^*) > \delta \geq 0$ when 
$\mu^*$ is $\delta$-improving.
Thus, when building label-setting methods,
the natural class of problems to focus on
is an SSP with 
$$
\Assume{2'} \hspace*{4cm}
C(\x_i, \ba) \, > \, 0 \qquad \text{for all }
\x_i \in \XX \text{ and } \forall \ba \in A(\x_i).
$$
We note that \Assume{2'} and the compactness of all $A_i$'s imply \Assume{2}.
\end{remark}

\begin{remark}[{\em Label-setting on a reachable subgraph}] \mbox{ }\\
\label{rem:reachable}
Consider a reachable set $X_c$ consisting of all nodes $\x \in X$
such that there exists a policy $\pi$ leading from $\x$ to $\T$
with probability 1.
Assumption \Assume{1} states that $X = X_c$.  If this is not the case,
but the condition \Assume{2'} holds, then
$U(\x_i) = +\infty$ for all $\x_i \not \in X_c$.
If a stationary policy $\mu^*$ is optimal, then \Assume{2'}
implies $p_{ij}(\mu^*(\x_i)) = 0$ whenever 
$\x_i \in X_c$ and $\x_j \not \in X_c.$
If $\mu^*$ also satisfies (\ref{def:improving}) on $X_c$, 
then a Dijkstra-like method is still applicable.  Upon its termination,
the value function will be computed correctly on $X_c$ and
we will have $U(\x) = +\infty$ for all $\x \not \in X_c$.
(This is analogous to using the original
Dijkstra's method on a digraph that does not contain directed paths
to $\T$ from every $\x \in X$.)  Of course, an efficient implementation 
will terminate the method as soon as all nodes remaining in $L$
have a label of $+\infty$.
\end{remark}

\begin{remark}[{\em Label-setting for SSP: prior work.}] \mbox{ }\\
\label{rem:LS_prior_work}
It is natural to look for classes of SSPs, for which
either (\ref{def:improving}) or (\ref{def:delta_improving})
is automatically satisfied by every optimal policy.  
One simple example is the deterministic 
case:  if for every $\x_i \in \XX$ and 
$\forall \ba \in A(\x_i)$
there exists $\x_j \in X$ such that $p_{ij}(\ba) = 1$,
then every optimal policy is consistently improving due to \Assume{2'}.
Tsitsiklis was the first to prove causality 
of two truly stochastic SSPs  
\cite{Tsitsiklis_conference, Tsitsiklis}, which he used to
develop Dijkstra-like and Dial-like methods for two special 
discretizations of the Eikonal PDE on a uniform Cartesian grid.  
For Eikonal PDEs discretized on arbitrary acute meshes, the equivalent 
of property (\ref{def:improving}) for all optimal controls was proven in 
\cite[Appendix]{SethVlad3}.  Another implementation of a Dial-like 
method for the Eikonal PDE was introduced in \cite{KimGMM}.
For the optimal control of hybrid systems,
a similar property was used to build Dijkstra-like methods in
\cite{Branicky1} and \cite{SethVladHybrid}.  
It is interesting to note that of all these papers only Tsitsiklis'
work mentions the SSP interpretation of the discretizations, but even 
in \cite{Tsitsiklis} the proof of causality is very problem-specific
and relies on the properties of the PDE and on a particular choice of 
the computational stencil.  In section \ref{s:HJB_discr} we use MSSPs 
to provide convergence criteria for Dijkstra's method in the above cases
as well as the bucket-width for Dial's method whenever it applies.
\end{remark}

\begin{remark}[{\em Label-correcting methods for SSP.}] \mbox{ }\\
\label{rem:LC_SSP}
Whenever the value iteration converges after finitely many steps,
{\em label-correcting methods} become another viable alternative.
Their implementation for the deterministic case can be found in standard 
references (e.g., \cite{Ahuja}, \cite{Bertsekas_NObook}).
Two such methods were introduced in \cite{PolyBerTsi}
for the SSP considered in \cite{Tsitsiklis}.
In a more recent work \cite{BorRasch}, a similar method
was applied to a finite element discretization of the
Hamilton-Jacobi-Bellman PDE.
In the latter case, the label-setting is used 
to obtain convergence-up-to-specified-tolerance even though
the equivalent of condition (\ref{def:improving}) is not satisfied.
Label-setting methods have an optimal worst-case computational cost;
however, in practice label-correcting methods can outperform them on 
many problems.  The exact conditions under which this happens are
still a matter of debate even in deterministic problems.
While clearly interesting, the comparison of their performance on 
SSPs is outside the scope of the current paper.
\end{remark}

\section{Multimode Stochastic Shortest Path Problems.}
\label{s:MSSP}

We will use $\Xi_n$ to denote the set of possible barycentric coordinates in
$\R^n$, i.e., 
$$\Xi_n = \left\{ 
\xi = (\xi_1, \cdots, \xi_n) \, \mid \, 
\xi_1 + \cdots + \xi_n = 1
\text { and } \forall \xi_j \geq 0 
\right\}.$$
We will further define $I(\xi) = \{ i \, \mid \, \xi_i > 0 \}$ and use
$\{ \e_1, \ldots, \e_n \}$ to denote the standard canonical basis in $\R^n$.
Finally, we will use $\R^n_{+,0}$ to denote the non-negative orthant in $\R^n$,
i.e., $\R^n_{+,0} = 
\left\{ 
(x_1, \cdots, x_n) \, \mid \, 
\forall x_j \geq 0 
\right\}.$

We will assume the following
\begin{enumerate}
\item
For every node $\x_i \in \XX$ we are given a list of ``modes'' 
$\M_i = \M(\x_i) = \{m_1, \cdots, m_{r_i} \}$,
where each mode $m \in \M_i$ is a non-empty subset of $X \backslash \{ \x_i \}$ 
and $r_i = r(\x_i) = \left| \M_i \right| \geq 1.$
\item
The nodes within each mode are ordered;
i.e., $m = (\z^m_1, \cdots, \z^m_{|m|})$, where $\z^m_j \neq \z^m_k$ if $j \neq k$.
\item
All controls have a special structure $\ba = (m, \xi \in \Xi_{|m|})$
and there exists an available control 
$(m, \xi) \in A(\x_i)$ for all $m \in \M_i$ and all $\xi \in \Xi_{|m|}$.
\item
The corresponding transition probability is
$$
p \left( \x_i, \x, (m, \xi) \right) \, = \,
\begin{cases}
\xi_j, & \text{ if $\x = z^m_j$ for some $j \in \{1, \cdots, |m|\};$}\\
0, & \text{ otherwise.}
\end{cases}
$$
\item
The transition costs are defined for each mode separately, i.e.,
$C \left (\x_i, (m, \xi) \right) = C^m(\x_i, \xi).$
\item
For $\forall \x_i \in \XX$ and $\forall m \in \M_i$ the
function $C^m(\x_i, \xi)$ is a positive continuous 
function of $\xi$.
\item
There exists a constant upper bound 
$\kappa$ on stochastic outdegrees;\\ 
i.e.,
$\left( \sum\limits_{m \in \M_i} |m| \right) \leq \kappa$
for $i=1, \ldots, M.$
\end{enumerate}

For these MSSPs
it is natural to interpret the decision made at 
each stage as a deterministic choice of a mode $m$
plus the choice of a desirable probability distribution for 
the transition to one of the successor nodes in $m$.
We note that the above framework is sufficiently flexible:
each node can have its own number of modes,
each mode can have its own number of successor nodes,
and different modes can have overlaps (e.g., $\z^{m_1}_j$
can be the same as $\z^{m_2}_k$).  The fully deterministic case
is conveniently included when $|m|=1$ for each mode $m$.  

The above assumptions imply \Assume{0} and \Assume{2'}; hence, the value
iteration converges at least on the reachable subgraph $X_c$ 
(see remarks \ref{rem:cost_lower_bound} and \ref{rem:reachable}).

The dynamic programming equations (\ref{eq:Markov_general})
can be now rewritten as
\begin{eqnarray}
\label{eq:MSSP_DP}
U(\x) & = &  \min\limits_{m \in \M(\x)} 
\left\{ V^m(\x) \right\},\\
\label{eq:mode_DP}
V^m(\x) & = & \min\limits_{\xi \in \Xi_{|m|}} 
\left\{ 
C^m(\x, \xi) \, + \, 
\sum\limits_{j = 1}^{|m|}  \xi_j U(\z^m_j) 
\right\}
.
\end{eqnarray}
Before developing criteria for solvability of the above equations by
label-setting methods (subsections \ref{ss:MSSP_causality} and 
\ref{ss:cost_criteria})
we provide a number of representative examples 
to illustrate the MSSP framework.

\subsection{MSSPs and Modeling.}
\label{ss:MSSP_modeling}

In this subsection we list several examples of 
discrete stochastic control problems, 
which are naturally modeled in the MSSP framework.
Our goal is twofold: to explore the type of stochasticity present in MSSPs
and to understand which types of MSSPs make the development of 
label-setting methods worthwhile.

We begin by considering two very simple MSSPs,
which illustrate the difference and relationship between
explicit and absolute causalities.

\begin{example} 
\label{ex:aux1}
For $M=3$, suppose that each node has only one mode,
and nodes $\x_1,\x_3$ have only one node $\T$ in their modes.
I.e., the transition to $\T$ is deterministic and costs
$C_{it} > 0$ for $i=1,3$.  The $\x_2$'s only mode 
is $m = \{ \x_1, \x_3 \}$.  (See Figure \ref{fig:MSSP_elementary}A.)
Since the problem is so 
simple, it is clear that 
$$
U_1 = C_{1t}; \qquad U_3 = C_{3t};
 \qquad 
 U_2 = \min\limits_{\xi \in \Xi_2} 
\left\{ 
C^m(\x_2, \xi) \, + \, 
(\xi_1 U_1 + \xi_2 U_3)
\right\}.
$$
\end{example}
This SSP is obviously {\em explicitly} causal:
$U_2$ will be computed correctly, provided it is 
computed after $U_1$ and $U_3$
(see Remark \ref{rem:explicit_causality}).\\
However, whether or not this SSP is {\em absolutely} causal
depends on the cost function:\\
Suppose $\xi^*$ is the unique minimizer of the above 
and $C^m$ is such that
$$
U_1 < U_2 < C^m \left( \x_2, 
\left[
\begin{array}{c}
1\\
0
\end{array}
\right]
\right) 
+ U_1 < U_3.
$$
If $\xi^*_2 > 0$, it is
clear that the Dijkstra-like method of section \ref{ss:LS_general_SSP}
would compute $U_2$ incorrectly (since $\x_2$ would be moved from
$L$ to $P$ before $\x_3$).
If label-setting methods were to be used here, we would need to find conditions 
on $C^m(\x_2, \xi)$, which make the above scenario impossible for any
choice of positive $C_{1t}$ and $C_{3t}$.

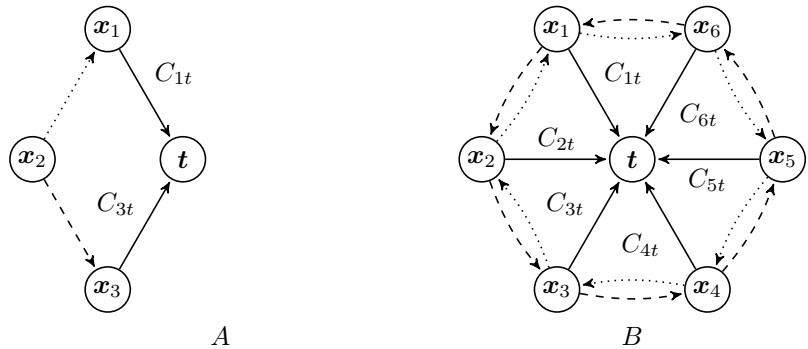
\begin{figure}[hhhh]
\center{
$
\begin{array}{cc}
\begin{tikzpicture}[->,>=stealth',shorten >=1pt,auto,node distance=3cm, scale=2,
                    semithick]
  \tikzstyle{every state}=[draw, shape=circle, inner sep=0mm, minimum size = 6mm]

  \node[state]         (T)	at (0,0)					{$\T$};
  \node[state]         (x1)	at (-0.5, 0.866)			{$\x_1$};
  \node[state]         (x2)	at (-1,0)					{$\x_2$};
  \node[state]         (x3)	at (-0.5, -0.866)			{$\x_3$};

  \path (x1)	edge								node {$C_{1t}$}		(T);
  \path (x2)	edge	[dashed]					node {}				(x3)
				edge	[dotted]					node {}				(x1);
  \path (x3)	edge								node {$C_{3t}$}		(T);
\end{tikzpicture}
\hspace*{3cm}
&
\begin{tikzpicture}[->,>=stealth',shorten >=1pt,auto,node distance=3cm, scale=2,
                    semithick]
  \tikzstyle{every state}=[draw, shape=circle, inner sep=0mm, minimum size = 6mm]

  \node[state]         (T)	at (0,0)					{$\T$};
  \node[state]         (x5)	at (1,0)					{$\x_5$};
  \node[state]         (x6)	at (0.5, 0.866)				{$\x_6$};
  \node[state]         (x1)	at (-0.5, 0.866)			{$\x_1$};
  \node[state]         (x2)	at (-1,0)					{$\x_2$};
  \node[state]         (x3)	at (-0.5, -0.866)			{$\x_3$};
  \node[state]         (x4)	at (0.5, -0.866)			{$\x_4$};

  \path (x1)	edge	[bend right=10, dashed]		node {}				(x2)
				edge	[bend right=10, dotted]		node {}				(x6)
				edge								node {$C_{1t}$}		(T);
  \path (x2)	edge	[bend right=10, dashed]		node {}				(x3)
				edge	[bend right=10, dotted]		node {}				(x1)
				edge								node {$C_{2t}$}		(T);
  \path (x3)	edge	[bend right=10, dashed]		node {}				(x4)
				edge	[bend right=10, dotted]		node {}				(x2)
				edge								node {$C_{3t}$}		(T);
  \path (x4)	edge	[bend right=10, dashed]		node {}				(x5)
				edge	[bend right=10, dotted]		node {}				(x3)
				edge								node {$C_{4t}$}		(T);
  \path (x5)	edge	[bend right=10, dashed]		node {}				(x6)
				edge	[bend right=10, dotted]		node {}				(x4)
				edge								node {$C_{5t}$}		(T);
  \path (x6)	edge	[bend right=10, dashed]		node {}				(x1)
				edge	[bend right=10, dotted]		node {}				(x5)
				edge								node {$C_{6t}$}		(T);
\end{tikzpicture} \\
A & B
\end{array}
$
}
\caption{
{\footnotesize
Two simple examples of MSSP.  In both cases, starting from $\x_i$, 
one needs to select an optimal probability distribution over two 
successor nodes (dashed \& dotted lines) or to opt for the
deterministic transition to $\T$ (priced at $C_{it}>0$ and 
shown by solid lines wherever available).
}
}
\label{fig:MSSP_elementary}
\end{figure}

\begin{example} 
\label{ex:circular_list}
For a somewhat more interesting example, consider 
a circular doubly linked list of $M$ nodes.
(See Figure \ref{fig:MSSP_elementary}B for the case $M=6$.)
Each $\x_i$ has two modes: $m = (\x_{prev}, \x_{next})$
and $m' = \{ \T \}$.  
\end{example}
The applicability of label-setting methods seems 
harder to judge in this case, but 
it is clear that we don't want $\x_i$ moved from $L$ to $P$
before its neighbors if the optimal choice at $\x_i$
involves a possible (non-deterministic) transition to one of them.
For instance, 
$$
U_2 = \min \; \left\{ \, C_{2t}, \; 
\min\limits_{\xi \in \Xi_2} 
\left\{ 
C^m(\x_2, \xi) \, + \, 
(\xi_1 U_1 + \xi_2 U_3)
\right\}
\, \right\},
$$
and we note that this equation is provably absolutely causal if
in Example \ref{ex:aux1}
a Dijkstra-like method produces correct $U_2$  
for all allowable $C_{1t}$ and $C_{3t}$.
In fact, if the same $C^m(\x_i, \xi)$ is used for each $\x_i$,
the above condition is sufficient to show
the absolute causality of the full problem.  
This idea is generalized in subsection \ref{ss:MSSP_causality}.

In practical terms, whether or not the MSSP in Example \ref{ex:aux1}
is absolutely causal is irrelevant since the value function can be 
easily computed directly (see Remark \ref{rem:explicit_causality}).  
On the other hand,
Example \ref{ex:circular_list} can be viewed as a variant of an optimal 
stopping problem, whose absolute causality would yield a more efficient 
alternative to the basic value iteration when $M$ is large.
We continue by considering a number of interesting 
single-mode-for-each-node examples.

In the opening act of Tom Stoppard's famous play \cite{Stoppard},
the title characters engage in statistical experimentation with 
supposedly fair coins.  The fairness of their coins is highly suspect since they
are observing a very long and uninterrupted run of ``heads''.
Rosencrantz (Ros) is bored by the game and would be glad to stop playing, 
but Guildenstern (Guil) insists on continuing.
The following two examples are inspired by the above.  

\begin{example} 
\label{ex:GR_1}
Suppose Guil will agree to stop only
after observing $K$ ``heads'' in a row.  Ros has to pay some fee for every toss of a coin and 
is interested in minimizing his expected total cost up to the termination.  
Moreover, suppose that for each toss Ros can request a coin with any probability distribution 
$(p, (1-p))$ on possible outcomes (``heads'' vs. ``tails''), but Guil intends to charge him  
a different fee $C(p)$ based on his request.  The problem is to find an optimal 
$p_i^* \in [0,1]$ that Ros should request after observing $i$ ``heads'' in a row
(i.e., in the state $\x_i$). 
\end{example}
Figure \ref{fig:RosGuil_1} (Left) shows the graph representation of the game for $K=3$.
Denoting $\x_K = \T$, we set $U_K = 0$.
Since there is exactly one mode per node,
and two successor-nodes only ($\xi \in \Xi_2; \, \xi_1 = p, \, \xi_2 = (1-p)$),
the Dynamic Programming equations of this game can be re-written as
$$
U_i 
\; =  \;
\min\limits_{\xi \in \Xi_2} 
\left\{ 
C(\x_i, \xi) \, + \, 
\xi_1 U_{i+1} + \xi_2 U_0
\right\}
\; =  \;
\min\limits_{p \in [0,1]} 
\left\{ 
C(p) \, + \, 
p U_{i+1} + (1-p) U_0
\right\}.
\qquad
\text{ for } i=0, \ldots, K-1. 
$$

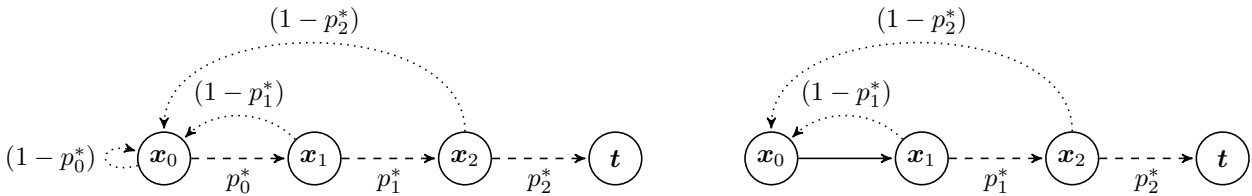
\begin{figure}[hhhh]
\center{
$
\hspace*{-10mm}
\begin{array}{lr}
\begin{tikzpicture}[->,>=stealth',shorten >=1pt,auto,node distance=2cm, 
                    semithick]
  \tikzstyle{every state}=[draw, shape=circle, inner sep=0mm, minimum size = 7mm]

  \node[state]         (x0)	at (0,0)				{$\x_0$};
  \node[state]         (x1) [right of=x0]			{$\x_1$};
  \node[state]         (x2) [right of=x1]			{$\x_2$};
  \node[state]         (T)	[right of=x2]			{$\T$};
  
  \path (x0)		edge	[dashed]					node [swap]		{$p_0^*$}			(x1)
					edge	[loop left, dotted]		node [swap]		{$(1-p_0^*)$}		(x0);

  \path (x1)		edge	[dashed]					node [swap]		{$p_1^*$}			(x2)
					edge	[bend right=45, dotted]		node [swap]		{$(1-p_1^*)$}		(x0);

  \path (x2)		edge	[dashed]					node [swap]		{$p_2^*$}			(T)
					edge	[bend right=90, dotted]		node [swap]		{$(1-p_2^*)$}		(x0);
\end{tikzpicture}
&
\hspace*{1cm}
\begin{tikzpicture}[->,>=stealth',shorten >=1pt,auto,node distance=2cm,
                    semithick]
  \tikzstyle{every state}=[draw, shape=circle, inner sep=0mm, minimum size = 7mm]

  \node[state]         (x0)	at (0,10)				{$\x_0$};
  \node[state]         (x1) [right of=x0]			{$\x_1$};
  \node[state]         (x2) [right of=x1]			{$\x_2$};
  \node[state]         (T)	[right of=x2]			{$\T$};
  
  \path (x0)		edge								node			{}				(x1);

  \path (x1)		edge	[dashed]					node [swap]		{$p_1^*$}			(x2)
					edge	[bend right=45, dotted]		node [swap]		{$(1-p_1^*)$}		(x0);

  \path (x2)		edge	[dashed]					node [swap]		{$p_2^*$}			(T)
					edge	[bend right=90, dotted]		node [swap]		{$(1-p_2^*)$}		(x0);
\end{tikzpicture}
\end{array}
$
}
\caption{
{\footnotesize
The first Guildenstern and Rosencrantz game for $K=3$ (Left).  
After $i$ ``heads'' in a row the game-state is $\x_i$.
Transitions corresponding to ``heads'' and 
``tails'' are shown by dashed and dotted lines respectively.  The self-transition in $\x_0$ can be
removed and replaced by a deterministic transition (solid line) with the optimal cost $C_{01}$ (Right).
}
}
\label{fig:RosGuil_1}
\end{figure}

We note that the self-transition in the node $\x_0$ can be dealt with in the spirit of 
Remark \ref{rem:self_transition}; see Figure \ref{fig:RosGuil_1} (Right).  
This results in a deterministic transition to $\x_1$:
$$
U_0 = C_{01} + U_1,
\qquad 
\text{where }
C_{01} = \min\limits_{p \in (0,1]} \frac{C(p)}{p} = \frac{C(p^*_0)}{p^*_0}.
$$
After this simplification, the example satisfies all the assumptions listed for MSSPs;
therefore, the applicability of label-setting methods can be determined by checking if
$C(\xi)$ satisfies any of the criteria developed in section \ref{ss:cost_criteria}.

\begin{remark}
Even though this MSSP is not explicitly causal,
a simple structure of the graph makes it almost trivial for our purposes:\\
1.  Every (eventually terminating) path from
$\x_i$ leads through $\x_{i+1}$, which implies $U_i > U_{i+1}$.  Thus, label-setting methods
are only applicable if $p^*_i = 1$ for all $i$.\\  
2.  On the other hand, recursive relations similar to the one used above can be 
repeatedly employed to make this into a deterministic problem.  For example,
\begin{eqnarray*}
U_1
& =&
\min\limits_{p \in [0,1]} 
\left\{ 
C(p) \, + \, 
p U_2 + (1-p) U_0
\right\}
=
\min\limits_{p \in [0,1]} 
\left\{ 
C(p) \, + \, 
p U_2 + (1-p) (C_{01} + U_1)
\right\}\\
&=&
\min\limits_{p \in [0,1]} 
\left\{ 
\left[C(p) + (1-p) C_{01}\right] \, + \, 
p U_2 + (1-p) U_1
\right\}
=
C_{12} + U_2,
\end{eqnarray*}
where
$$
C_{12} = \min\limits_{p \in (0,1]} \frac{C(p) + (1-p) C_{01}}{p} = \frac{C(p^*_1) + (1-p^*_1) C_{01}}{p^*_1}.
$$
Repeating this procedure we can compute the value function in $O(K)$ steps 
(counting the above minimization
as a single operation) even if some of the $p^*_i$'s are less than one
(in which case the value iteration would not converge in a finite number of steps).
\label{rem:R_and_G_boring}
\end{remark}

\begin{example} 
\label{ex:GR_2}
Now suppose that Guil will agree to stop only
after observing an uninterrupted run of $K_h$ ``heads''  or $K_t$ ``tails'';
see Figure \ref{fig:RosGuil_2}.
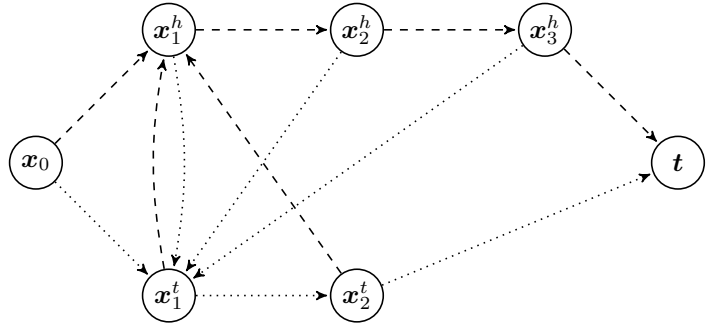
\begin{figure}[hhhh]
\begin{center}
\begin{tikzpicture}[->,>=stealth',shorten >=1pt,auto,node distance=2.5cm,
                    semithick]
  \tikzstyle{every state}=[draw, shape=circle, inner sep=0mm, minimum size = 7mm]

  \node[state]         (x0)							{$\x_0$};
  \node[state]         (x1h) [above right of=x0]	{$\x^h_1$};
  \node[state]         (x2h) [right of=x1h]			{$\x^h_2$};
  \node[state]         (x3h) [right of=x2h]			{$\x^h_3$};
  \node[state]         (T)	[below right of=x3h]	{$\T$};
  \node[state]         (x1t) [below right of=x0]	{$\x^t_1$};
  \node[state]         (x2t) [right of=x1t]			{$\x^t_2$};
 
  \path (x0)		edge	[dashed]					node			{}					(x1h);
  \path (x1h)		edge	[dashed]					node			{}					(x2h);
  \path (x2h)		edge	[dashed]					node			{}					(x3h);
  \path (x3h)		edge	[dashed]					node			{}					(T);
  \path (x0)		edge	[dotted]					node			{}					(x1t);
  \path (x1t)		edge	[dotted]					node			{}					(x2t);
  \path (x2t)		edge	[dotted]					node			{}					(T);

  \path (x1h)		edge	[dotted, bend left=10]		node			{}					(x1t);
  \path (x2h)		edge	[dotted]					node			{}					(x1t);
  \path (x3h)		edge	[dotted]					node			{}					(x1t);

  \path (x1t)		edge	[dashed, bend left=10]		node			{}					(x1h);
  \path (x2t)		edge	[dashed]					node			{}					(x1h);
\end{tikzpicture}
\end{center}
\caption{
{\footnotesize
The second Guildenstern and Rosencrantz game for $K_h=4$ and $K_t=3$.
After $i$ ``heads'' or ``tails'' in a row the game-state is $\x_i^h$ or $\x_i^t$ respectively.
}
}
\label{fig:RosGuil_2}
\end{figure}
\end{example}
Identifying $\T = \x^h_{K_h} = \x^t_{K_t}$ and $\x_0 = \x^h_0 = \x^t_0$, 
we can re-write the dynamic programming equations as
\begin{eqnarray*}
U(\T) &=& 0;\\
U(\x^h_i) &=& 
\min\limits_{p \in [0,1]} 
\left\{ 
C(p) \, + \, 
p U(\x^h_{i+1}) + (1-p) U(\x^t_1)
\right\};
\qquad
\text{ for } i=0, \ldots, K_h-1;\\
U(\x^t_i) &=& 
\min\limits_{p \in [0,1]} 
\left\{ 
C(p) \, + \, 
p U(\x^h_1) + (1-p) U(\x^t_{i+1})
\right\};
\qquad
\text{ for } i=0, \ldots, K_t-1.
\end{eqnarray*}
Since the Remark \ref{rem:R_and_G_boring} does not apply, 
in this case it is possible
to have a non-trivial optimal strategy (i.e., $p^* \in (0,1)$),
which might be computable by the label-setting methods.
Their applicability can be guaranteed by certain properties of 
the cost function
as will be shown by theorems of section \ref{ss:cost_criteria}.
For example, this MSSP is absolutely causal
(and thus efficiently computable using a Dijkstra-like method
regardless of specific values of $K_h$ and $K_t$)
for 
$$
C_1(p) =  3 + 2 p - p^4 - (1-p)^2;
\qquad \text{or} \qquad
C_2(p) =  \sqrt{p^2 + (1-p)^2};
\qquad \text{or} \qquad
C_3(p) =  4 + (p-0.5)^3.
$$
(Recalling that $\xi_1 = p$ and $\xi_2 = (1-p)$,
it will be easy to check that Theorems \ref{thm:concave}, \ref{thm:homogen}, 
and \ref{thm:second_deriv_bound}
apply to $C_1$, $C_2$, and $C_3$ respectively.)

A similar analysis works
when Guil is allowed to use different prices
depending on the current state of the game (i.e., with $C(\x_i, p)$ instead of $C(p)$)
or 
when the number of possible outcomes is higher (e.g., dice instead of coins, $\Xi_6$
instead of $\Xi_2$.)

\begin{example} 
\label{ex:multitask}
Suppose a person is engaged in multi-tasking, dividing her attention between
activities A and B.  This allocation of resources is described by 
$\xi = (\xi_A, \xi_B) \in \Xi_2$.
We assume that\\ 
$\bullet \,$
per every time-unit she reaches a new milestone in 
{\em exactly one} of these activities;\\
$\bullet \,$
the probability of a milestone reached in A or B is proportional to the fraction of 
her attention invested in that activity ($\xi_A$ or $\xi_B$) during that time-unit;\\
$\bullet \,$
the current state of the process $\x_{i,j}$ reflects the number of milestones reached 
in both activities;\\
$\bullet \,$
the cost (per time-unit) of all possible resource allocations 
is specified by $C(\x_{i,j}, \xi)$;\\
$\bullet \,$
the process terminates after at least $K_A$ milestones are reached in A or
at least $K_B$ milestones reached in B;\\
$\bullet \,$
the goal is to minimize the total expected cost up to a termination.\\
A particular instance of this problem is illustrated in Figure \ref{fig:miltitask}.

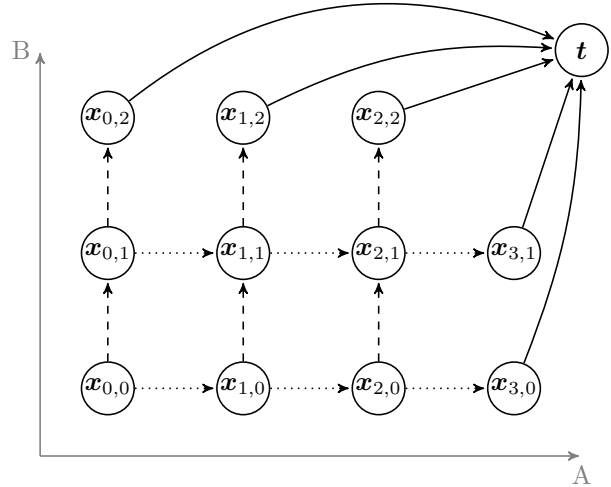
\begin{figure}[hhhh]
\begin{center}
\begin{tikzpicture}[->,>=stealth',shorten >=1pt,auto,node distance=2.5cm,
                    semithick, scale=1.8]
  \tikzstyle{every state}=[draw, shape=circle, inner sep=0mm, minimum size = 7mm]

\draw[gray] (-0.5,-0.5) -> (3.5,-0.5) node [below] {A};
\draw[gray] (-0.5,-0.5) -> (-0.5,2.5) node [left] {B};

  \node[state]         (x00)	 at (0,0)				{$\x_{0,0}$};
  \node[state]         (x01)	 at (0,1)				{$\x_{0,1}$};
  \node[state]         (x02)	 at (0,2)				{$\x_{0,2}$};
  \node[state]         (x10)	 at (1,0)				{$\x_{1,0}$};
  \node[state]         (x11)	 at (1,1)				{$\x_{1,1}$};
  \node[state]         (x12)	 at (1,2)				{$\x_{1,2}$};
  \node[state]         (x20)	 at (2,0)				{$\x_{2,0}$};
  \node[state]         (x21)	 at (2,1)				{$\x_{2,1}$};
  \node[state]         (x22)	 at (2,2)				{$\x_{2,2}$};
  \node[state]         (x30)	 at (3,0)				{$\x_{3,0}$};
  \node[state]         (x31)	 at (3,1)				{$\x_{3,1}$};

  \node[state]         (t)	 at (3.5,2.5)					{$\T$};

  \path (x00)		edge	[dashed]	node	{}		(x01)
					edge	[dotted]	node	{}		(x10);
  \path (x10)		edge	[dashed]	node	{}		(x11)
					edge	[dotted]	node	{}		(x20);
  \path (x20)		edge	[dashed]	node	{}		(x21)
					edge	[dotted]	node	{}		(x30);
  \path (x30)		edge	[bend right=10]			node	{}		(t);

  \path (x01)		edge	[dashed]	node	{}		(x02)
					edge	[dotted]	node	{}		(x11);
  \path (x11)		edge	[dashed]	node	{}		(x12)
					edge	[dotted]	node	{}		(x21);
  \path (x21)		edge	[dashed]	node	{}		(x22)
					edge	[dotted]	node	{}		(x31);
  \path (x31)		edge	[]			node	{}		(t);

  \path (x02)		edge	[bend left=30]	node	{}		(t);
  \path (x12)		edge	[bend left=15]	node	{}		(t);
  \path (x22)		edge	[]				node	{}		(t);

\end{tikzpicture}
\end{center}
\caption{
{\footnotesize
The multitasking problem for $K_A=3$ and $K_B=2$.
Each of the nodes $\x_{i,K_B}$ or $\x_{K_A,j}$ 
has a deterministic transition to $\T$ only.
All other nodes $\x_{i,j}$ have a single mode
$(\x_{i,j+1}, \x_{i+1,j})$.  
The node $\x_{K_A,K_B}$ is not needed since the 
process always terminates before reaching it.
}
}
\label{fig:miltitask}
\end{figure}
\end{example}
The above MSSP is obviously explicitly causal
since the number of milestones achieved in each activity
can only increase as time goes on regardless of the chosen policy.
As usual with explicitly causal SSPs,
the causal ordering of the nodes is a priori known regardless of 
the cost functions 
and the label-setting methods are really not needed.
However, a slight variation of the above is already computationally 
challenging:
\begin{example} 
\label{ex:multitask_strong}
Suppose the same person also dedicates a part of her
attention to some distraction D and her
resource allocation is now $\xi = (\xi_A, \xi_B, \xi_D) \in \Xi_3$,
where $\xi_D$ is the probability of getting completely distracted
and inadvertently ``resetting'' the process 
(i.e., transition into $\x_{0,0}$).
\end{example}
If the diversion is appealing (i.e., if 
$C(\x_{i,j}, \xi)$ is a decreasing function of $\xi_D$), this
problem is not explicitly causal and the applicability of
label-setting methods becomes relevant.
The possibility of self-transition in $\x_{0,0}$ is again 
dealt with in the spirit of Remark \ref{rem:self_transition}
and theorems from section \ref{ss:cost_criteria} can be then used 
to test for the absolute causality.
Generalizations of this example (to an arbitrary number of activities
and/or partial resets due to a diversion) can be handled similarly. 

We note that the MSSPs occupy a niche in between purely deterministic
and generally stochastic shortest path problems.
It is easy to see that in all of the above examples 
the stochastic aspect of the model is not due to some uncontrollable event
(after all, the deterministic/pure controls are always available in MSSPs),
but rather due to our belief that a randomized/mixed control might 
carry a lower cost.  

\begin{remark}[{\em Randomized/mixed controls \& deterministic SP}] \mbox{ }\\
\label{rem:randomized_mixed}
In most deterministic discrete control problems 
mixed policies are considered unnecessary.
But this is mainly due to the fact that 
the cost of implementing such mixed/randomized controls
is usually modeled by a linear function,
i.e., 
$  
C^m(\x, \xi) = \sum\limits_{j = 1}^{|m|}  \xi_j C_j.
$
(More generally, Theorem \ref{thm:concave} will show that an optimal
control can be found among the pure controls $\{\e_j\}$
 for any concave cost function.)
However, if the cost is non-concave, i.e., if
$  
C^m(\x, \xi) < \sum\limits_{j = 1}^{|m|}  \xi_j C^m(\x, \e_j)
$
for at least some $\xi \in \Xi_{|m|}$,
then a mixed strategy is available ``at a discount'' and might 
be advantageous.
\end{remark}

\noindent 
The methods developed in this paper are therefore most useful
for MSSPs that\\
$\bullet \,$
are not explicitly causal (otherwise direct methods are more efficient);\\
$\bullet \,$
but are absolutely causal due to (possibly non-concave) costs satisfying criteria 
in Section \ref{ss:cost_criteria}.

\noindent
Additional examples (stemming from discretizations of continuous
optimal control problems) are discussed in section \ref{s:HJB_discr}.

\subsection{Causality of MSSP and single-mode auxiliary problems.}
\label{ss:MSSP_causality}
\noindent
Checking whether a given MSSP is absolutely causal can be hard,
although sufficient conditions can be developed hierarchically.
This approach was already used in subsection \ref{ss:MSSP_modeling}
to show the relationship between examples \ref{ex:aux1} and \ref{ex:circular_list}.

\noindent
In the general case,
if $\mu^*$ is an optimal policy and $\mu^*(\x) = (m^*, \xi^*)$, then
the formulas (\ref{eq:MSSP_DP} - \ref{eq:mode_DP}) imply
$$
U(\x) = V^{m^*}(\x) = C^{m^*}(\x, \xi^*) \, + \, 
\sum\limits_{j = 1}^{|m^*|}  \xi_j^* U(\z^{m^*}_j).
$$
If $\mu^*$ is consistently $\delta$-improving, we have
$
\quad
(\xi^*_j > 0) \qquad \Longrightarrow \qquad V^{m^*}(\x) > U(\z^{m^*}_j) + \delta.
$

\begin{observ}
\label{obs:implicit_causality}
For each mode $m$ let $\Xi^* \subset \Xi_{|m|}$ be a set of all 
minimizers in formula (\ref{eq:mode_DP}).
If 
\begin{equation}
\label{def:delta_improving_MSSP}
(\xi^*_j > 0) \quad \Longrightarrow \quad V^m(\x) > U(\z^m_j) + \delta,
\qquad \text{for }
\begin{array}{l}
\forall \x \in \XX; \; \forall m \in \M(\x);\\
\forall \xi^* \in \Xi^*; \; j = 1, \ldots, |m|
\end{array}
\end{equation}
then {\em every} optimal policy is consistently $\delta$-improving
(and this MSSP is absolutely $\delta$-causal).
\end{observ}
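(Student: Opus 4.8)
The plan is to translate the notion ``consistently $\delta$-improving'' into the MSSP notation of (\ref{eq:MSSP_DP})--(\ref{eq:mode_DP}) and then apply the hypothesis (\ref{def:delta_improving_MSSP}) almost verbatim; once the identifications are in place the argument is short. First I would fix an arbitrary optimal stationary policy $\mu^*$ and an arbitrary node $\x \in \XX$. By assumptions (3)--(4) of the MSSP framework, $\mu^*(\x) = (m^*, \xi^*)$ for some $m^* \in \M(\x)$ and $\xi^* \in \Xi_{|m^*|}$, and the induced transition probabilities are $p(\x, \z^{m^*}_j, \mu^*(\x)) = \xi^*_j$; hence ``$p(\x, \z^{m^*}_j, \mu^*(\x)) > 0$'' is literally the same as ``$\xi^*_j > 0$'', and the statement that $\mu^*$ is consistently $\delta$-improving reduces to: $\xi^*_j > 0 \Rightarrow U(\x) > U(\z^{m^*}_j) + \delta$ for every such $\x$ and $j$.

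Next I would record two consequences of the optimality of $\mu^*$ at $\x$. Since $\mu^*$ is an optimal stationary policy, its cost coincides with $U$ and satisfies its own Bellman identity, which in MSSP notation is $U(\x) = C^{m^*}(\x, \xi^*) + \sum_{j=1}^{|m^*|} \xi^*_j\, U(\z^{m^*}_j)$ (this is exactly the identity already invoked in Remark \ref{rem:cost_lower_bound}). Comparing the right-hand side with (\ref{eq:mode_DP}) shows it is $\geq V^{m^*}(\x)$, and comparing $V^{m^*}(\x)$ with (\ref{eq:MSSP_DP}) shows $V^{m^*}(\x) \geq U(\x)$; since the chain of inequalities starts and ends at $U(\x)$, both are equalities. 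Therefore (i) $\xi^*$ is a minimizer in (\ref{eq:mode_DP}) for the mode $m^*$, i.e.\ $\xi^* \in \Xi^*$ for $m^*$, and (ii) $V^{m^*}(\x) = U(\x)$.

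The conclusion is then immediate: given $j$ with $\xi^*_j > 0$, I apply hypothesis (\ref{def:delta_improving_MSSP}) with this $\x$, the mode $m^*$, and $\xi^* \in \Xi^*$ to obtain $V^{m^*}(\x) > U(\z^{m^*}_j) + \delta$, and substituting (ii) gives $U(\x) > U(\z^{m^*}_j) + \delta$. Since $\x$ and $\mu^*$ were arbitrary, every optimal stationary policy is consistently $\delta$-improving, which is by definition the statement that the MSSP is absolutely $\delta$-causal.

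I do not expect a genuine obstacle here; the only point requiring care is the reachability caveat. If \Assume{1} fails, then $U \equiv +\infty$ off the reachable set $X_c$, the Bellman identity for $U$ must be read on $X_c$ only, and one invokes Remark \ref{rem:reachable} to know that an optimal policy never steps from $X_c$ into its complement, so the displayed argument remains valid node-by-node on $X_c$ (with the trivially correct $U = +\infty$ elsewhere). Assuming \Assume{1}, or simply restricting the discussion to $X_c$, makes even this bookkeeping unnecessary, and the proof is otherwise a one-line application of (\ref{def:delta_improving_MSSP}).
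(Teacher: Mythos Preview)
Your proposal is correct and follows essentially the same approach as the paper: the paper records the identity $U(\x) = V^{m^*}(\x) = C^{m^*}(\x,\xi^*) + \sum_j \xi^*_j U(\z^{m^*}_j)$ immediately before stating the observation and then treats the observation as self-evident from it, while you simply spell out the chain of inequalities establishing that identity and that $\xi^* \in \Xi^*$. There is no substantive difference.
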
 

As a result, we can develop label-setting applicability conditions
on a mode-per-mode basis.
In the following we will focus on one $\x \in \XX$ and one mode $m \in \M(\x)$ 
to develop conditions on $C^m(\x, \cdot)$ that guarantee causality for 
all possible values of $U(\z^m_j)$'s.  Since $\x$ and $m$ are fixed,
we will simplify the notation by using
$$
V = V^m(\x); \quad C(\cdot) = C^m(\x, \cdot); \quad
W_j = U(\z^m_j); \quad n=|m|.
$$
Furthermore, interpreting $\xi$ and $W$ as column vectors in $\R^n$,
we define $F: \Xi_{n} \times \R^n \mapsto \R$ as follows:
$$
F(\xi, W) \; = \; C(\x, \xi) \, + \, \xi^T \, W.
$$

\noindent
The dynamic programming equation (\ref{eq:mode_DP})
can be now rewritten as 
\begin{equation}
\label{eq:1_mode_DP}
V \; = \; \min\limits_{\xi \in \Xi_n} 
\left\{ 
C(\xi) \, + \, 
\sum\limits_{j = 1}^{n}  \xi_j W_j 
\right\} 
\; = \; 
\min\limits_{\xi \in \Xi_n} 
F(\xi, W)
\end{equation} 

\noindent
Once the vector $W$ is specified, 
this also determines the set of minimizers
$\Xi^*(W)  = \argmin\limits_{\xi \in \Xi_n} F(\xi, W).$

\begin{definition}
\label{def:mode_delta_causal}
The mode $m$ is {\em absolutely $\delta$-causal} if
$$
(\xi^*_j > 0) \quad \Longrightarrow \quad V > W_j + \delta,
\qquad \text{for } \;
\forall W \in \R^n_{+,0}; \ 
\forall \xi^* \in \Xi^*(W); \ j = 1, \ldots, n.
$$
We will also refer to a mode as {\em absolutely causal}
if the above holds at least with $\delta=0$.
\end{definition}

A simple way to interpret this definition
is by considering an auxiliary single-mode MSSP
on the nodes $\{ \x, \z^m_1, \ldots, \z^m_n, \T \}$
with a single mode for each node (see Figure \ref{fig:auxiliary}).
Let the transition from each $\z^m_j$ to
$\T$ be deterministic with cost $C_{jt} = W_j \geq 0$,
and for $\x$ let the mode be $m = (\z^m_1, \ldots, \z^m_n)$
using the transition cost $C^m(\x,\cdot)$ from the original problem.

\begin{figure}[hhhh]
\center{
\begin{tikzpicture}[->,>=stealth',shorten >=1pt,auto,node distance=3cm, scale=1,
                    semithick]
  \tikzstyle{every state}=[draw, shape=circle, inner sep=0mm, minimum size = 6mm]

  \node[state]         (T)	at (3,0)					{$\T$};
  \node[state]         (x)	at (-3,0)					{$\x$};
  \node[state]         (z1)	at (0, 2.4)					{$\z^m_1$};
  \node[state]         (z2)	at (0, 0.8)					{$\z^m_2$};
  \node[state]         (z3)	at (0, -0.8)				{$\z^m_3$};
  \node[state]         (z4)	at (0, -2.4)				{$\z^m_4$};

  \path (x)		edge	[dashed]					node {}				(z1)
				edge	[dashed]					node {}				(z2)
				edge	[dashed]					node {}				(z3)
				edge	[dashed]					node {}				(z4);
  \path (z1)	edge	[near start, above=1pt]				node {$C_{1t}$}		(T);
  \path (z2)	edge	[near start, above=1pt]				node {$C_{2t}$}		(T);
  \path (z3)	edge	[near start, above=1pt]				node {$C_{3t}$}		(T);
  \path (z4)	edge	[near start, above=1pt]				node {$C_{4t}$}		(T);
\end{tikzpicture}
}
\caption{
{\footnotesize
An auxiliary single-mode problem for $m \in \M(\x)$.
Deterministic transition are shown by solid lines; $n = |m| = 4.$
}
}
\label{fig:auxiliary}
\end{figure}
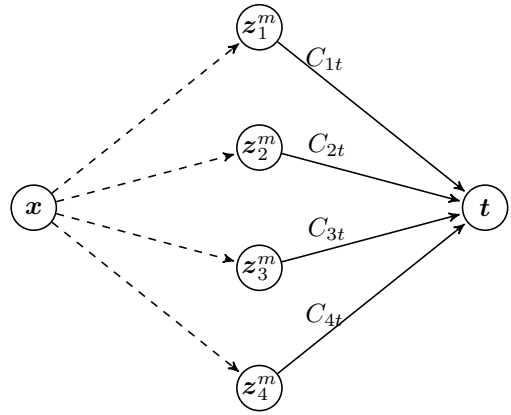

\noindent
The mode $m$ is absolutely causal if a Dijkstra-like method
solves the auxiliary single-mode problem correctly for every vector 
$W \in \R^n_{+,0}.$  The mode is absolutely $\delta$-causal if 
the same is true for a Dial-like method with buckets of width $\delta$.
In fact, Example \ref{ex:aux1} can be viewed as such auxiliary
problem for the mode $m \in \M(\x_2)$ of Example \ref{ex:circular_list}.
We emphasize that the absolute causality of auxiliary problems
is desirable not because we intend to use label-setting on any of them
(after all, each auxiliary problem is explicitly causal, 
and a direct computation is efficient; see Remark \ref{rem:explicit_causality}), 
but because the label-setting methods might be advantageous on the original MSSP.

The conditions on the mode $m$ 
in the definition
\ref{def:mode_delta_causal}
are more restrictive
then those in 
(\ref{def:delta_improving_MSSP}) since in the latter case
the $\delta$-causality is needed for only one (albeit unknown)
vector $W$.  Thus, Observation \ref{obs:implicit_causality}
yields the following sufficient condition
for applicability of label-setting methods to MSSPs:
\begin{corollary}
\label{thm:LS_for_MSSP}
For a general MSSP, if every mode of every node is
absolutely causal, then the MSSP is also absolutely causal and 
a Dijkstra-like method is applicable.
If each mode $m$ is absolutely $\delta_m$-causal,
then the MSSP is absolutely $\Delta$-causal with
$$
\Delta \; = \;
\left(\min\limits_{\x \in \XX, \, m \in \M(\x)} \; \{ \delta_m \} \right)
$$ 
and a Dial-like method is applicable if $\Delta > 0$. 
\end{corollary}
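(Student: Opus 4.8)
The plan is to reduce Corollary~\ref{thm:LS_for_MSSP} to Observation~\ref{obs:implicit_causality}: I would verify that condition~(\ref{def:delta_improving_MSSP}) holds with $\delta = \Delta$, and then read the applicability statements off the discussion of Section~\ref{ss:LS_general_SSP}. Throughout I would use the fact, already recorded after the MSSP assumptions, that the value function $U$ exists and is finite and nonnegative on the reachable subgraph $X_c$, with $U(\T) = 0$; assuming \Assume{1} it is finite on all of $\XX$, and if \Assume{1} fails one runs the same argument on $X_c$ and sets $U \equiv +\infty$ off $X_c$, exactly as in Remark~\ref{rem:reachable}.

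The substantive step is a ``specialization'' argument. Fix any $\x \in \XX$ and any mode $m \in \M(\x)$, set $n = |m|$, and form the single vector $W = (U(\z^m_1),\ldots,U(\z^m_n)) \in \R^n_{+,0}$. With this $W$, equation~(\ref{eq:mode_DP}) becomes exactly~(\ref{eq:1_mode_DP}) in the notation of Subsection~\ref{ss:MSSP_causality}: $V^m(\x) = V = \min_{\xi \in \Xi_n} F(\xi, W)$, and the minimizer set $\Xi^*$ of~(\ref{eq:mode_DP}) that appears in Observation~\ref{obs:implicit_causality} is precisely $\Xi^*(W) = \argmin_{\xi \in \Xi_n} F(\xi, W)$. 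Now I invoke the hypothesis that $m$ is absolutely $\delta_m$-causal: Definition~\ref{def:mode_delta_causal} is a statement about \emph{every} $W \in \R^n_{+,0}$, so it applies in particular to our $W$, giving $V > W_j + \delta_m$ whenever $\xi^*_j > 0$ for some $\xi^* \in \Xi^*(W)$. Since $\delta_m \geq \Delta$, this yields $V^m(\x) > U(\z^m_j) + \Delta$. As $\x$ and $m$ were arbitrary, condition~(\ref{def:delta_improving_MSSP}) holds with $\delta = \Delta$.

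By Observation~\ref{obs:implicit_causality}, every optimal policy of the MSSP is then consistently $\Delta$-improving, i.e.\ the MSSP is absolutely $\Delta$-causal (and absolutely causal when $\Delta = 0$, which covers the case $\delta_m = 0$ for all $m$). Since a stationary optimal policy $\mu^*$ exists under the standing assumptions, $\mu^*$ is a consistently improving optimal policy, so the Dijkstra-like scheme of Section~\ref{ss:LS_general_SSP} computes $U$ correctly, and MSSP assumption~7 (the uniform outdegree bound $\kappa \ll M$) gives its $O(M \log M)$ cost. If $\Delta > 0$, then $\mu^*$ is moreover consistently $\Delta$-improving, so the Dial-like scheme with buckets of width $\Delta$ also computes $U$ correctly, at cost $O(M)$.

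The only place where care is genuinely needed is checking that $W = (U(\z^m_1),\ldots,U(\z^m_n))$ really lies in $\R^n_{+,0}$ -- finite and nonnegative -- which is where \Assume{1}, or in its absence the restriction to $X_c$ together with \Assume{2'}, enters. Beyond this bookkeeping there is no real obstacle: the entire content of the corollary is that Definition~\ref{def:mode_delta_causal} was deliberately phrased with a universal quantifier over \emph{hypothetical} successor values $W$, so that it automatically covers the single \emph{actual} successor-value vector produced by the true value function, and the remainder is merely combining this with the monotonicity $\delta_m \geq \Delta$ and with Observation~\ref{obs:implicit_causality}.
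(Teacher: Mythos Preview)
Your proposal is correct and follows exactly the route the paper takes: the paper states the corollary as an immediate consequence of Observation~\ref{obs:implicit_causality}, noting that Definition~\ref{def:mode_delta_causal} quantifies over all $W \in \R^n_{+,0}$ and therefore in particular covers the single vector of actual successor values $W = (U(\z^m_1),\ldots,U(\z^m_n))$. Your write-up makes this specialization explicit and adds the bookkeeping about $W \in \R^n_{+,0}$ (via \Assume{1}/\Assume{2'} or restriction to $X_c$), which the paper leaves implicit; otherwise the arguments are identical.
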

We note that it is possible to have an absolutely causal MSSP
some of whose modes are not absolutely causal.  
This is reminiscent of the fact that the original
Dijkstra's method might be converging correctly even for 
some deterministic problems with negative transition 
penalties.

\subsection{Criteria on cost and absolute causality.}
\label{ss:cost_criteria}
Consider a mode $m \in \M(\x)$ such that $n=|m|>1$.
In view of Corollary \ref{thm:LS_for_MSSP}, it is important
to find additional conditions on the transition cost function
$C(\cdot) = C^m(\x, \cdot)$ that guarantee $m$'s 
absolute $\delta$-causality.

One obvious example is
$  
C(\xi) = \sum\limits_{j = 1}^{n}  \xi_j C_j,
$
where all $C_j$'s are positive constants.
In that case $F$ is linear and equation (\ref{eq:1_mode_DP})
reduces to 
$$
V \; = \; \min\limits_{\xi \in \Xi_n} F(\xi, W)
\; = \; \min\limits_{\xi \in \Xi_n} 
\left\{ 
\sum\limits_{j = 1}^{n}  \xi_j (C_j + W_j) 
\right\} 
\; = \; \min\limits_j 
\left\{ 
( C_j + W_j)
\right\}, 
$$
which is not different from the deterministic shortest path 
equation (\ref{eq:Optimality_Determ}).  The same principle
works for arbitrary concave costs.  

\begin{theorem}
Suppose 
$$
\Assume{3}	\hspace*{6cm}
C: \R^n \mapsto \R_+ \quad \text{ is concave.}
$$
Then the mode $m$ is absolutely $\delta$-causal with
$\delta = \min\limits_{j} \, C(\e_j).$  Moreover,
$V$ can be more efficiently evaluated as
$V \; = \; \min\limits_j 
\left\{ 
\left( C(\e_j) + W_j \right)
\right\}.$ 
\label{thm:concave}
\end{theorem}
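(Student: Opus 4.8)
The plan is to work directly from concavity. Fix any $W\in\R^n_{+,0}$. Since $\xi\mapsto\xi^{T}W$ is linear and $C$ is concave by \Assume{3}, the map $F(\cdot,W)=C(\cdot)+(\cdot)^{T}W$ is concave on the simplex $\Xi_n$. I would then invoke the elementary fact that a concave function on a polytope attains its minimum at a vertex: writing an arbitrary $\xi\in\Xi_n$ as $\xi=\sum_{j=1}^{n}\xi_j\e_j$, concavity gives $F(\xi,W)\ge\sum_j\xi_j F(\e_j,W)\ge\min_j F(\e_j,W)$, hence $\min_{\xi\in\Xi_n}F(\xi,W)=\min_j F(\e_j,W)=\min_j\{C(\e_j)+W_j\}$. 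This is exactly the claimed efficient formula $V=\min_j\{C(\e_j)+W_j\}$, and it also exhibits an explicit vertex minimizer, so $\Xi^*(W)$ is nonempty and contains $\e_{j^\star}$ for $j^\star\in\argmin_j\{C(\e_j)+W_j\}$.

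For the $\delta$-causality claim I would upgrade this from vertex minimizers to arbitrary ones. Let $\xi^*\in\Xi^*(W)$ with support $I(\xi^*)$. Applying the same concavity estimate to the convex combination $\xi^*=\sum_{i\in I(\xi^*)}\xi^*_i\e_i$ yields
\[
V=F(\xi^*,W)\ \ge\ \sum_{i\in I(\xi^*)}\xi^*_i\,F(\e_i,W)\ \ge\ \sum_{i\in I(\xi^*)}\xi^*_i\,V\ =\ V,
\]
so both inequalities are equalities; since $\xi^*_i>0$ for every $i\in I(\xi^*)$, this forces $F(\e_i,W)=V$ for each such $i$. Consequently, for any index $j$ with $\xi^*_j>0$ we obtain $V=F(\e_j,W)=C(\e_j)+W_j\ \ge\ \bigl(\min_k C(\e_k)\bigr)+W_j=\delta+W_j$, which is the inequality required in Definition~\ref{def:mode_delta_causal} for this mode with $\delta=\min_j C(\e_j)$. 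Note that \Assume{3} together with $C$ being $\R_+$-valued forces $\delta>0$, as is needed for a Dial-like method.

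The single concavity inequality $F(\xi^*,W)\ge\sum_i\xi^*_i F(\e_i,W)$ is the whole engine; everything else is bookkeeping. The one step I would treat carefully is the gap between the weak bound $V\ge W_j+\delta$ just obtained and the strict inequality written in Definition~\ref{def:mode_delta_causal}: strictness is automatic unless the active index $j$ also attains $\min_k C(\e_k)$, i.e.\ unless an optimal control places positive weight on a pure, minimal-cost transition. In that borderline case $F(\e_j,W)=V$ still holds, the transition is effectively deterministic with cost exactly $\delta$, and correctness of a Dial-like method with buckets of width $\delta$ follows from the same same-bucket reasoning as in the deterministic setting of Section~\ref{ss:LS_determ} (a co-bucket node $\x$ satisfies $U(\x)<U(\xbar)+\delta\le U(\xbar)+C(\x,\xbar)$). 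Thus the concave case, and in particular the linear case $C(\xi)=\sum_j\xi_j C_j$, simply recovers and mildly generalizes the classical deterministic shortest-path update $V=\min_j\{C_j+W_j\}$.
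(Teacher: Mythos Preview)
Your proof is correct and follows essentially the same route as the paper: both observe that $F(\cdot,W)$ is concave on $\Xi_n$, so any minimizer $\xi^*$ must satisfy $F(\e_j,W)=V$ for every $j\in I(\xi^*)$, giving $V-W_j=C(\e_j)\ge\delta$ and the vertex formula for $V$. You are in fact more careful than the paper in flagging that this yields only the weak inequality $V\ge W_j+\delta$ rather than the strict one in Definition~\ref{def:mode_delta_causal}; the paper's own proof shares this borderline gap, and your remark that the equality case is effectively deterministic (so Dial's same-bucket argument still applies) is a worthwhile clarification.
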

\begin{proof}
Since $F(\xi, W) = C(\xi) + \xi^T W$ we know that
the function $F(\xi, W)$ is concave on $\Xi_n$.
Thus, if $\xi^* \in \Xi^*(W) = \argmin\limits_{\xi \in \Xi_n} F(\xi, W)$
then 
$$
(\xi^*_j > 0) \quad \Longrightarrow \quad F(\e_j, W) = F(\xi^*_j, W)
\quad \Longrightarrow \quad U - W_j =  C(\e_j) \geq \delta > 0,
$$
hence the mode $m$ is absolutely $\delta$-causal.
\end{proof}

Homogeneous cost functions naturally arise in many SSPs.
We recall that a function $L(\y)$ is absolutely homogeneous of degree $d$
if $L(a \y) = |a|^d L(\y)$ for all $\y \in \R^n, \, a \in \R.$
If $L$ is also smooth, 
by Euler's Homogeneous Function Theorem, it satisfies the equation
$\y^T \nabla L(\y) = d L(\y)$.

\begin{lemma} Suppose the cost 
$$
\Assume{4} \qquad C: \R_+^n \mapsto \R_+ \text{ is 
continuously differentiable and absolutely homogeneous of degree $d$.}
$$
Then for every $W \in \R_{+,0}^n, \, \xi^* \in \Xi^*(W)$ we have
$$ 
\quad (\xi^*_j > 0)
\quad \Longrightarrow \quad 
V - W_j = \frac{\partial C}{\partial \xi_j}(\xi^*) - (d-1) C(\xi^*).
$$
\label{lemma:homogen}
\end{lemma}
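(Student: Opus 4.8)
The plan is to extract the identity from the first-order optimality conditions for the constrained minimization in (\ref{eq:1_mode_DP}), combined with Euler's Homogeneous Function Theorem applied to $C$. First I would fix $W \in \R^n_{+,0}$ and an arbitrary $\xi^* \in \Xi^*(W)$ (such a minimizer exists since $\xi \mapsto C(\xi) + \xi^T W$ is continuous on the compact simplex $\Xi_n$), and write $I^* = I(\xi^*) = \{\, j \mid \xi^*_j > 0 \,\}$; arbitrariness of $\xi^*$ will then take care of the ``for every $\xi^* \in \Xi^*(W)$'' in the statement. Since $C$ is continuously differentiable, so is the map $\xi \mapsto F(\xi, W)$, with $\partial F/\partial \xi_j(\xi, W) = \partial C/\partial \xi_j(\xi) + W_j$.

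\emph{Step 1 (a common multiplier on $I^*$).} I would show that $\partial F/\partial \xi_j(\xi^*, W)$ has the same value $\lambda$ for all $j \in I^*$. For any $j, k \in I^*$, the curve $t \mapsto \xi^* + t(\e_j - \e_k)$ stays in $\Xi_n$ for all small $|t|$ (the barycentric sum is unchanged and the only two moving coordinates, $\xi_j$ and $\xi_k$, stay positive because $\xi^*_j, \xi^*_k > 0$); hence $t = 0$ minimizes $t \mapsto F(\xi^* + t(\e_j - \e_k), W)$, and differentiating at $t = 0$ forces $\partial F/\partial \xi_j(\xi^*, W) = \partial F/\partial \xi_k(\xi^*, W)$. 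Calling this common value $\lambda$ (when $|I^*| = 1$ just set $\lambda := \partial F/\partial \xi_j(\xi^*, W)$ for the unique $j \in I^*$), we obtain
\[
W_j \;=\; \lambda \,-\, \frac{\partial C}{\partial \xi_j}(\xi^*), \qquad j \in I^*.
\]
\emph{Step 2 (evaluating $\lambda$ via Euler).} Since $\xi^*_j = 0$ for $j \notin I^*$ and $\sum_{j \in I^*} \xi^*_j = 1$, substituting the Step 1 relation into $V = F(\xi^*, W)$ yields
\[
V \;=\; C(\xi^*) + \sum_{j \in I^*} \xi^*_j W_j \;=\; C(\xi^*) + \lambda - \sum_{j=1}^{n} \xi^*_j \,\frac{\partial C}{\partial \xi_j}(\xi^*).
\]
By \Assume{4} and Euler's Homogeneous Function Theorem, $\sum_{j=1}^{n}\xi^*_j\,\partial C/\partial \xi_j(\xi^*) = (\xi^*)^T\nabla C(\xi^*) = d\,C(\xi^*)$, so $V = \lambda - (d-1)C(\xi^*)$, i.e. $\lambda = V + (d-1)C(\xi^*)$. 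Feeding this back into the Step 1 relation gives $V - W_j = \partial C/\partial \xi_j(\xi^*) - (d-1)C(\xi^*)$ for every $j \in I^*$, which is exactly the asserted implication.

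The point I expect to need the most care is the legitimacy of Step 1 and of invoking Euler's theorem at $\xi^*$, since the minimizer may sit on the relative boundary of $\Xi_n$ (with some coordinates outside $I^*$ equal to zero). To make $\nabla C(\xi^*)$ meaningful there, I would read \Assume{4} as continuous differentiability up to the boundary of the nonnegative orthant --- the natural domain of an absolutely homogeneous extension of the cost originally given only on $\Xi_n$; note that the perturbation used in Step 1 stays strictly inside the orthant since it only moves coordinates in $I^*$, so that part is clean in any case. As an alternative, Step 1 can be obtained from the KKT conditions for minimizing $F(\cdot, W)$ over $\{\, \xi \ge 0,\ \sum_j \xi_j = 1 \,\}$, the constraint qualification being automatic because the gradient $(1, \dots, 1)$ of the equality constraint never vanishes. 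Everything else is routine algebra, and this identity is what Theorem~\ref{thm:homogen} builds upon.
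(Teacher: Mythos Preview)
Your proof is correct and follows essentially the same route as the paper: obtain a common multiplier $\lambda = W_j + \partial C/\partial \xi_j(\xi^*)$ on the active set $I(\xi^*)$, take the $\xi^*$-weighted average, and apply Euler's Homogeneous Function Theorem to identify $\lambda = V + (d-1)C(\xi^*)$. The only cosmetic difference is that the paper simply invokes the Kuhn--Tucker conditions for Step~1, whereas you derive the needed stationarity directly via the two-coordinate perturbation $\xi^* + t(\e_j - \e_k)$ (and then mention KKT as an alternative); your added remarks about differentiability at the relative boundary of $\Xi_n$ are a welcome clarification that the paper leaves implicit.
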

\begin{proof}
For all $j \in I(\xi^*)$ the Kuhn-Tucker optimality conditions
state that 
\begin{equation}
 \label{eq:KT_1}
\lambda \; = \; W_j + \frac{\partial C}{\partial \xi_j}(\xi^*),
\end{equation}
where $\lambda$ is a Lagrange multiplier.  We recall that
$\sum\limits_{j \in I(\xi^*)} \xi^*_j = 1$.
Multiplying (\ref{eq:KT_1}) by $\xi^*_j$ and summing over all
$j \in I(\xi^*)$ we obtain
\begin{equation}
 \label{eq:KT_2}
\lambda = 
\sum\limits_{j \in I(\xi^*)} \xi^*_j \lambda =
\sum\limits_{j \in I(\xi^*)} \xi^*_j
\left(
W_j + \frac{\partial C}{\partial \xi_j}(\xi^*)
\right) = 
\sum\limits_{i=1}^n \xi^*_i W_i + 
\left(
\sum\limits_{i=1}^n \xi^*_i 
 \frac{\partial C}{\partial \xi_i}(\xi^*)
\right).
\end{equation}
Thus, by Euler's Homogeneous Function Theorem,
\begin{equation}
\label{eq:KT_3}
\lambda = \sum\limits_{i=1}^n \xi^*_i W_i + d C(\xi^*) = 
F(\xi^*, W) + (d-1) C(\xi^*).
\end{equation}
Since $\xi^*$ is a minimizer, $V = F(\xi^*, W) = \lambda - (d-1) C(\xi^*)$ and
it follows from (\ref{eq:KT_1}) that
$$
V - W_j = \frac{\partial C}{\partial \xi_j}(\xi^*) - (d-1) C(\xi^*)
\qquad \text{for all } W \in R^n_{+,0}, \, \xi^* \in \Xi^*(W), j \in I(\xi^*).
$$
\end{proof}

\begin{theorem}
If $C$ satisfies \Assume{4} and 
$$
\Assume{5} \hspace*{3cm}
\frac{\partial C}{\partial \xi_j}(\xi) - (d-1) C(\xi) > \delta \geq 0
\qquad \text{for } \forall \xi \in \Xi_n, \forall j \in I(\xi).
$$
then the mode is absolutely $\delta$-causal.
\label{thm:homogen}
\end{theorem}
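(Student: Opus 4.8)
The plan is to obtain this theorem almost immediately from Lemma~\ref{lemma:homogen}: the real work --- the Kuhn--Tucker stationarity computation combined with Euler's Homogeneous Function Theorem --- has already been carried out there, so all that remains is to feed the identity of that lemma into the hypothesis \Assume{5}.

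Concretely, I would fix an arbitrary $W \in \R^n_{+,0}$. Since $C$ is continuous (part of \Assume{4}) and $\Xi_n$ is compact, the minimizer set $\Xi^*(W)$ is nonempty; pick any $\xi^* \in \Xi^*(W)$ and any index $j$ with $\xi^*_j > 0$, i.e.\ $j \in I(\xi^*)$. Lemma~\ref{lemma:homogen} then yields
$$
V - W_j \;=\; \frac{\partial C}{\partial \xi_j}(\xi^*) \, - \, (d-1)\,C(\xi^*).
$$
Because $\xi^* \in \Xi_n$ and $j \in I(\xi^*)$, hypothesis \Assume{5} applies at the point $\xi^*$ and bounds the right-hand side strictly below by $\delta$, so $V > W_j + \delta$. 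As $W$, $\xi^*$, and $j$ were arbitrary, Definition~\ref{def:mode_delta_causal} is satisfied and the mode $m$ is absolutely $\delta$-causal; when $\delta = 0$ this is exactly absolute causality.

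There is essentially no obstacle left once Lemma~\ref{lemma:homogen} is in hand. The only points requiring a modicum of care are that the partial derivative in \Assume{5} is evaluated at a minimizer $\xi^*$ which may lie on the boundary of $\R^n_{+,0}$ (where it is nonetheless well-defined, since \Assume{4} posits continuous differentiability on the whole orthant), and that \Assume{5} is deliberately demanded for \emph{every} $j \in I(\xi)$ at \emph{every} $\xi \in \Xi_n$, precisely so that it can be invoked at the a priori unknown optimal $\xi^*$. The one genuine difficulty of the argument --- showing that a minimizer over the simplex satisfies the stated stationarity relation on its support face and extracting $V - W_j$ from it via Euler's identity --- has already been absorbed into the proof of Lemma~\ref{lemma:homogen}.
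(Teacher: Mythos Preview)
Your proposal is correct and follows essentially the same approach as the paper: invoke Lemma~\ref{lemma:homogen} to obtain $V - W_j = \frac{\partial C}{\partial \xi_j}(\xi^*) - (d-1)C(\xi^*)$ and then apply \Assume{5}. The paper's proof is in fact a single sentence to this effect; your additional remarks on compactness and differentiability at the boundary are harmless elaborations but not needed.
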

\begin{proof}
If $\xi^* \in \Xi^*(W)$ and $\xi^*_j > 0$, the condition \Assume{5} and 
Lemma \ref{lemma:homogen} imply that $V - W_j > \delta \geq 0$.
\end{proof}

\begin{remark}
The case most frequently encountered in applications of SSPs is the
homogeneity of degree one.  When $d=1$, equation (\ref{eq:KT_3})
states that $\lambda = V$ and 
the condition \Assume{5} becomes even simpler
$$
\Assume{5'} \hspace*{5cm}
\frac{\partial C}{\partial \xi_j}(\xi)  > \delta \geq 0
\qquad \text{for } \forall \xi \in \Xi_n, \forall j \in I(\xi).
$$
Lemma \ref{lemma:homogen} and Theorem \ref{thm:homogen}
can be viewed as generalizations of the key idea in proofs of causality
in \cite{Tsitsiklis} and \cite[Appendix]{SethVlad3}.
\label{rem:homogen_one_common}
\end{remark}

\begin{remark}
If \Assume{4} holds and $C$ is strictly convex, then \Assume{5}
is a necessary condition for the absolute $\delta$-causality of the mode.
Indeed, suppose \Assume{5} is violated for some $\bar{\xi} \in \Xi_n, \, j \in I(\bar{\xi})$
and let $K = 1 + \max\limits_i \frac{\partial C}{\partial \xi_i}(\bar{\xi})$.
If for each $i=1, \ldots ,n$ we choose
$W_i = K - \frac{\partial C}{\partial \xi_i}(\bar{\xi})$,
this ensures that $W \in R_+^n$, $K = \lambda$, and 
$\Xi^*(W) = \{ \bar{\xi} \}$, which implies $V \leq W_j + \delta$
even though $\bar{\xi}_j > 0$.
\label{rem:sufficiency}
\end{remark}

\begin{lemma} Suppose the cost 
$$
\Assume{6} \hspace*{3cm}
C: \R_+^n \mapsto \R_+ \text{ is 
twice continuously differentiable.}
$$
Then for every $W \in \R_{+,0}^n, \xi^* \in \Xi^*(W), j \in I(\xi^*)$ 
there exists a point $\hat{\xi}$ 
on the straight line segment $[\e_j, \xi^*]$ such that 
$$ 
V - W_j \; = \; C(\e_j) \, - \, 
\frac{1}{2} (\e_j - \xi^*)^T H(\hat{\xi}) (\e_j - \xi^*),
$$
where $H$ is the Hessian matrix of $C(\xi)$.
\label{lemma:non_homogen}
\end{lemma}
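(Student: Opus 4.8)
The plan is to use the same Kuhn–Tucker setup as in Lemma~\ref{lemma:homogen}, but instead of invoking Euler's identity, expand $C$ by Taylor's theorem with the Lagrange form of the remainder. First I would record the first-order optimality conditions at the minimizer: for every $j \in I(\xi^*)$ there is a Lagrange multiplier $\lambda$ with $\lambda = W_j + \frac{\partial C}{\partial \xi_j}(\xi^*)$, exactly as in~(\ref{eq:KT_1}). Since $\xi^*$ is a minimizer, $V = F(\xi^*,W) = C(\xi^*) + (\xi^*)^T W$. The key identity to extract is $V - W_j = C(\xi^*) + \sum_{i} \xi^*_i (W_i - W_j)$; using $W_i = \lambda - \frac{\partial C}{\partial \xi_i}(\xi^*)$ for $i \in I(\xi^*)$ (and $\xi^*_i = 0$ otherwise), the $\lambda$ terms cancel against $\sum_i \xi^*_i = 1$, leaving $V - W_j = C(\xi^*) + \frac{\partial C}{\partial \xi_j}(\xi^*) - \sum_{i \in I(\xi^*)} \xi^*_i \frac{\partial C}{\partial \xi_i}(\xi^*) = C(\xi^*) + \nabla C(\xi^*)^T(\e_j - \xi^*)$, where the last step uses that $\frac{\partial C}{\partial \xi_j}(\xi^*) = \nabla C(\xi^*)^T \e_j$ when $j \in I(\xi^*)$.

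Next I would apply the second-order Taylor expansion of $C$ about $\xi^*$ evaluated at $\e_j$. Since $C$ is twice continuously differentiable (assumption \Assume{6}) and the segment $[\e_j, \xi^*]$ lies in the domain $\R_+^n$ (both endpoints are in $\Xi_n \subset \R_{+,0}^n$, and the orthant is convex), there is a point $\hat{\xi} \in [\e_j, \xi^*]$ with
$$
C(\e_j) = C(\xi^*) + \nabla C(\xi^*)^T(\e_j - \xi^*) + \tfrac{1}{2}(\e_j - \xi^*)^T H(\hat{\xi})(\e_j - \xi^*).
$$
Substituting the gradient term from this identity into the expression for $V - W_j$ obtained above yields
$$
V - W_j = C(\e_j) - \tfrac{1}{2}(\e_j - \xi^*)^T H(\hat{\xi})(\e_j - \xi^*),
$$
which is exactly the claimed formula.

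The only mild subtlety is the domain issue for the Taylor remainder: $C$ is defined on $\R_+^n$, and one should note that if $j \notin I(\xi^*)$ then the implication in the statement is vacuous, so we may assume $j \in I(\xi^*)$, in which case $\e_j$ and $\xi^*$ both lie in $\Xi_n$ and the connecting segment stays in $\R_{+,0}^n$ where $C$ and its second derivatives are defined; differentiability up to the boundary of the orthant is what \Assume{6} provides. I expect the main (and essentially only) obstacle to be bookkeeping: making sure the $\lambda$ cancellation is handled correctly when $\xi^*$ is not in the relative interior of $\Xi_n$, i.e., when some coordinates vanish and the Kuhn–Tucker conditions only give equalities on $I(\xi^*)$. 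This is resolved by splitting sums over $I(\xi^*)$ versus its complement and using $\xi^*_i = 0$ off $I(\xi^*)$, together with $\nabla C(\xi^*)^T \e_j = \frac{\partial C}{\partial \xi_j}(\xi^*)$ being valid precisely because $j \in I(\xi^*)$.
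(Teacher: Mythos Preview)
Your proposal is correct and follows essentially the same route as the paper: derive $V - W_j = C(\xi^*) + (\e_j - \xi^*)^T \nabla C(\xi^*)$ from the Kuhn--Tucker equalities on $I(\xi^*)$, then apply the second-order Taylor expansion of $C$ about $\xi^*$ at $\e_j$ with Lagrange remainder. One small remark: the identity $\nabla C(\xi^*)^T \e_j = \tfrac{\partial C}{\partial \xi_j}(\xi^*)$ holds for every $j$, not just $j \in I(\xi^*)$; what actually requires $j \in I(\xi^*)$ is the Kuhn--Tucker equality $\lambda = W_j + \tfrac{\partial C}{\partial \xi_j}(\xi^*)$ used earlier.
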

\begin{proof}
If $\xi^* \in \Xi^*(W)$ and $j \in I(\xi^*)$, then 
the Kuhn-Tucker optimality conditions yield two different formulas 
(\ref{eq:KT_1}) and (\ref{eq:KT_2})
for the Lagrange coefficient $\lambda$. Combining these we see that
$$
V - W_j  = (V - \lambda) + (\lambda - W_j) =
\left( C(\xi^*) -  \sum\limits_{i=1}^n \xi^*_i 
 \frac{\partial C}{\partial \xi_i}(\xi^*)
\right) + \frac{\partial C}{\partial \xi_j}(\xi^*) =
C(\xi^*) + (\e_j - \xi^*)^T \nabla C(\xi^*).
$$
By Taylor's theorem there exists a point 
$\hat{\xi} \in [\e_j, \xi^*] \subset \Xi_n$ such that
$$
C(\e_j) = C(\xi^*) + (\e_j - \xi^*)^T \nabla C(\xi^*) + 
\frac{1}{2} (\e_j - \xi^*)^T H(\hat{\xi}) (\e_j - \xi^*);
$$
thus, 
$V - W_j = C(\e_j) - \frac{1}{2} (\e_j - \xi^*)^T H(\hat{\xi}) (\e_j - \xi^*).$
\end{proof}

\begin{theorem} 
Consider an $n$ by $(n-1)$ matrix $B$, whose columns form an orthonormal basis for 
the subspace orthogonal to $[1, \ldots,1]^T \, \in \R^n$.
Suppose the cost $C$ satisfies \Assume{6} and $\hat{H}(\xi) = B^T H(\xi) B$ is 
its projected Hessian.
If $\Lambda(\hat{H}(\xi))$ is the maximum eigenvalue of $\hat{H}(\xi)$ and 
$$
\Assume{7} \hspace*{3cm} 
\min\limits_i \; C(\e_i) \quad > \quad 
\delta \; + \; 
\max \left\{ 0, \;
\max\limits_{\xi \in \Xi_n} \; \Lambda \left(\hat{H}(\xi) \right)
\right\}
$$
then the mode is absolutely $\delta$-causal.
\label{thm:second_deriv_bound}
\end{theorem}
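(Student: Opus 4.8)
\noindent
The plan is to reduce the claim to Lemma~\ref{lemma:non_homogen} and then to control the quadratic correction term it produces. Fix $W \in \R^n_{+,0}$, a minimizer $\xi^* \in \Xi^*(W)$, and an index $j$ with $\xi^*_j > 0$ (nothing needs to be checked when $\xi^*_j = 0$, by Definition~\ref{def:mode_delta_causal}). Since \Assume{6} holds, Lemma~\ref{lemma:non_homogen} applies and furnishes a point $\hat{\xi} \in [\e_j, \xi^*] \subset \Xi_n$ with
\[
V - W_j \; = \; C(\e_j) \, - \, \tfrac{1}{2}\,(\e_j - \xi^*)^T H(\hat{\xi})\,(\e_j - \xi^*) .
\]
Thus it suffices to show the bound $\tfrac{1}{2}(\e_j - \xi^*)^T H(\hat{\xi})(\e_j - \xi^*) \le \max\{0,\ \max_{\xi \in \Xi_n}\Lambda(\hat{H}(\xi))\}$, for then \Assume{7} gives $C(\e_j) \ge \min_i C(\e_i) > \delta + \max\{0,\ \max_{\xi}\Lambda(\hat H(\xi))\} \ge \delta + \tfrac{1}{2}(\e_j - \xi^*)^T H(\hat{\xi})(\e_j - \xi^*)$, i.e.\ $V - W_j > \delta$.

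\noindent
The first ingredient is a geometric observation: both $\e_j$ and $\xi^*$ lie in $\Xi_n$, so their difference is orthogonal to $[1,\ldots,1]^T$ and hence $\e_j - \xi^* = B w$ for a unique $w \in \R^{n-1}$; since the columns of $B$ are orthonormal, $\|w\| = \|\e_j - \xi^*\|$ and $(\e_j - \xi^*)^T H(\hat{\xi})(\e_j - \xi^*) = w^T B^T H(\hat{\xi}) B\, w = w^T \hat{H}(\hat{\xi})\, w$. The second ingredient is the elementary estimate $\|\e_j - \xi^*\|^2 = (1-\xi^*_j)^2 + \sum_{i \neq j}(\xi^*_i)^2 \le 2(1-\xi^*_j) \le 2$, obtained from $0 \le \xi^*_i \le 1$ for all $i$ together with $\sum_{i \neq j}\xi^*_i = 1 - \xi^*_j$.

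\noindent
Combining the two ingredients finishes the argument. If $w^T \hat{H}(\hat{\xi}) w \le 0$ then the correction term is nonpositive and the bound holds trivially. Otherwise $w^T \hat{H}(\hat{\xi}) w > 0$ forces the largest eigenvalue $\Lambda(\hat{H}(\hat{\xi}))$ to be positive, and then $w^T \hat{H}(\hat{\xi}) w \le \Lambda(\hat{H}(\hat{\xi}))\|w\|^2 \le 2\,\Lambda(\hat{H}(\hat{\xi})) \le 2\max_{\xi \in \Xi_n}\Lambda(\hat{H}(\xi))$, so $\tfrac{1}{2}(\e_j - \xi^*)^T H(\hat{\xi})(\e_j - \xi^*) \le \max_{\xi}\Lambda(\hat H(\xi))$. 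In either case $\tfrac{1}{2}(\e_j - \xi^*)^T H(\hat{\xi})(\e_j - \xi^*) \le \max\{0,\ \max_{\xi}\Lambda(\hat H(\xi))\}$, which by the reduction above yields $V - W_j > \delta$ and hence the absolute $\delta$-causality of the mode.

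\noindent
I do not expect a serious obstacle: once Lemma~\ref{lemma:non_homogen} is available, the only points requiring care are (i) noticing that $\e_j - \xi^*$ lies in the hyperplane $\{x : \sum_i x_i = 0\}$, so that the quadratic form is governed by the \emph{projected} Hessian $\hat{H}$ rather than by the (possibly larger) spectral radius of $H$ itself; and (ii) the estimate $\|\e_j - \xi^*\|^2 \le 2$, which is exactly what removes a factor of $1/2$ from \Assume{7}. One should also devote a sentence to the ``$\max\{0,\cdot\}$'': when $\hat{H}$ is negative semidefinite throughout $\Xi_n$ the quadratic correction can only help, and \Assume{7} still guarantees $C(\e_j) > \delta$.
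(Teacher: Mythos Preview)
Your proof is correct and follows essentially the same route as the paper: apply Lemma~\ref{lemma:non_homogen}, write $\e_j - \xi^* = Bw$ with $\|w\| = \|\e_j - \xi^*\| \le \sqrt{2}$, and bound the quadratic form by the top eigenvalue of the projected Hessian. The only cosmetic difference is how the ``$\max\{0,\cdot\}$'' case is handled: the paper disposes of the situation $\max_{\xi}\Lambda(\hat H(\xi)) \le 0$ by pointing to Theorem~\ref{thm:concave}, whereas you handle it directly by splitting on the sign of $w^T\hat H(\hat\xi)w$; your treatment is slightly more self-contained but not materially different.
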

\begin{proof}
First, we assume that  
$\max\limits_{\xi \in \Xi_n} \, \Lambda \left(\hat{H}(\xi) \right) > 0$
(the other case is already covered by Theorem \ref{thm:concave}).
If $\xi^* \in \Xi^*(W)$ and $j \in I(\xi^*)$, then
there exists $\beta \in \R^{n-1}$ such that $(\e_j - \xi^*) = B \beta$.
We note that $\|\beta\| = \|\e_j - \xi^*\| \leq \sqrt{2}.$
Since the Lemma \ref{lemma:non_homogen}
applies, 
$$
V - W_j  =  C(\e_j)  - \frac{1}{2} \beta^T \hat{H}(\hat{\xi}) \beta
\geq 
C(\e_j) - \frac{1}{2} \|\beta\|^2 \Lambda \left(\hat{H}(\hat{\xi}) \right)
\geq
\min\limits_i \, C(\e_i) \, - \, 
\max\limits_{\xi \in \Xi_n} \, \Lambda \left(\hat{H}(\xi) \right)
> \delta.
$$
\end{proof}

\begin{remark}
Since the cost function is always evaluated on $\Xi_n$,
the condition \Assume{4} is somewhat awkward:
the cost can {\em always} be considered absolute homogeneous of degree one
since $C(\xi)$ can be replaced by 
$\tilde{C}(\xi) = \| \xi \|_{\scriptscriptstyle1} 
C \left( \frac{\xi}{\| \xi \|_{\scriptscriptstyle1}} \right)$, which has 
the same values as $C$ on $\Xi_n$.  A more meaningful question is:
supposing that $C$ is smooth and homogeneous of degree one, 
what additional conditions on $C$ and its directional derivatives 
inside $\Xi_n$ are sufficient to guarantee \Assume{5'} ?
It is easy to see that \Assume{7} is an answer to that question
since $\max\limits_{\xi \in \Xi_n} \, \Lambda \left(\hat{H}(\xi) \right)$ is the 
upper bound on the second derivative of $C$ restricted to any straight line in $\Xi_n$.
\label{rem:homogen_extend}
\end{remark}

\section{MSSPs approximating continuous deterministic problems.}
\label{s:HJB_discr}
As already mentioned, MSSPs naturally arise in approximations of
deterministic continuous optimal control problems.
To illustrate this, we consider a class of time-optimal trajectory 
problems.  
Many variants of these problems are studied in robotic navigation,
optimal control, and front propagation literature;
a detailed discussion of the version presented here can be found 
in \cite{SethVlad3}.

Suppose $\y(t) \in \R^2$ is the vehicle's position at the time $t$
and the vehicle starts at $\y(0) = \x$ inside the domain $\domain$.
We are free to choose any direction of motion (any vector in 
$S_1 = \left\{ \ba \in \R^2 \, \mid \; \| \ba \| = 1 \, \right\}$), but the 
speed will dependent on the chosen direction and on the current position
of the vehicle.  The vehicle's dynamics is governed by 
$\y'(t) = f(\y(t), \ba(t)) \ba(t)$, where $f: \R^2 \times S_1 \mapsto \R$
is a Lipschitz-continuous speed function satisfying
$0 < F_1 \leq f(\x, \ba)  \leq F_2$ for all $\x$ and $\ba$.
Additional exit-time-penalty $q$ is incurred at the boundary;
we will assume that $q: \boundary \mapsto \R$ is non-negative and 
Lipschitz-continuous.\\
The goal is to cross the boundary $\boundary$ as quickly as possible.

The value function of this problem is $u(\x)$ (the minimal-time-to-exit
after starting from $\x$).  It is well-known that $u(\x)$ is 
the unique viscosity solution \cite{CranLion} of the following
static Hamilton-Jacobi-Bellman PDE
\begin{equation}
\begin{array}{ll}
\max\limits_{\ba \in S_{1}} \{ (\nabla u(\x) \cdot (-\ba)) f(\x, \ba) \}
= 1,&
\hspace{1mm} \x \in \domain \subset \R^2\\
u(\x) = q(\x),&
\hspace{1mm} \x \in \boundary.\\
\end{array}
\label{eq:HJB}
\end{equation}
The optimal trajectories coincide with the characteristic curves of this PDE.
If the problem is {\em isotropic} (i.e., if $f(\x, \ba) = f(\x))$,
the above PDE is equivalent to the usual Eikonal equation 
$\| \nabla u(\x) \| f(\x) = 1$ and the optimal trajectories coincide with 
the gradient lines of $u(\x)$.

For simplicity we will first assume that the domain $\domain$ is rectangular 
and that $X$ is a uniform Cartesian grid on $\cdomain$.  
Concentrating on one particular gridpoint $\x \in X \cap \domain$,
we will number all of its neighbors as in Figure \ref{fig:Two_Stencils}.
Suppose that the optimal initial direction of motion $\ba$
lies in the first quadrant and assume that the corresponding optimal trajectory
remains a straight line until intersecting the segment $\x_1 \x_3$ at some
point $\xtilde$ (see Figure \ref{fig:Two_Stencils}A).  Then it follows that 
$$
u(\x) \; = \; \frac{\|\xtilde - \x\|}{f(\x, \ba)} \; + \; u(\xtilde).
$$
Let $\xtilde = \xi_1 \x_1 + \xi_2 \x_3$;  a linear approximation yields
$$
u(\x) \; \approx \; \frac{\|\xtilde - \x\|}{f(\x, \ba)} \; + \; \xi_1 u(\x_1) + \xi_2 u(\x_3).
$$
Of course, since $\xtilde$ is not a priori known, we would have to minimize over all 
possible intersection points and all four quadrants.

\begin{figure}[hhhh]
\center{
$
\begin{array}{cc}
\begin{tikzpicture}[scale=1.6]
\tikzstyle{gridpoint}=[circle,draw=black!100,fill=black!100,thick,inner sep=0pt,minimum size=1mm]
\tikzstyle{interpoint}=[circle,draw=black!100,fill=black!100,thick,inner sep=0pt,minimum size=0.7mm]
\draw[step=1cm,gray,very thin] (-1.4,-1.4) grid (1.4,1.4);

  \node[gridpoint]         (x)		at (0,0)	[label=120:{$\x$}]					{};
  \node[gridpoint]         (x1)		at (1,0)	[label=right:{$\x_1$}]				{};
  \node[gridpoint]         (x2)		at (1,1)	[label=above right:{$\x_2$}]		{};
  \node[gridpoint]         (x3)		at (0,1)	[label=above:{$\x_3$}]				{};
  \node[gridpoint]         (x4)		at (-1,1)	[label=above left:{$\x_4$}]			{};
  \node[gridpoint]         (x5)		at (-1,0)	[label=left:{$\x_5$}]				{};
  \node[gridpoint]         (x6)		at (-1,-1)	[label=below left:{$\x_6$}]			{};
  \node[gridpoint]         (x7)		at (0,-1)	[label=below:{$\x_7$}]				{};
  \node[gridpoint]         (x8)		at (1,-1)	[label=below right:{$\x_8$}]		{};

  \path (x)		edge	[thick]		node {}		(x1)
				edge	[thick]		node {}		(x3)
				edge	[thick]		node {}		(x5)
				edge	[thick]		node {}		(x7);
  \path (x1)	edge	[thick]		node {}		(x3)
				edge	[thick]		node {}		(x7);
  \path (x5)	edge	[thick]		node {}		(x3)
				edge	[thick]		node {}		(x7);

  \draw[->]		(0,0)	--	(2,1)	node[below] {$\ba$}; 

  \node[interpoint]       (xtilde)	at (0.6666,0.3333)	[label=above:{$\xtilde$}]	{};
\end{tikzpicture}&

\begin{tikzpicture}[scale=1.6]
\tikzstyle{gridpoint}=[circle,draw=black!100,fill=black!100,thick,inner sep=0pt,minimum size=1mm]
\tikzstyle{interpoint}=[circle,draw=black!100,fill=black!100,thick,inner sep=0pt,minimum size=0.7mm]
\draw[step=1cm,gray,very thin] (-1.4,-1.4) grid (1.4,1.4);

  \node[gridpoint]         (x)		at (0,0)	{};
  \node	at (-0.1,0.2) {$\x$};
  			
  \node[gridpoint]         (x1)		at (1,0)	[label=right:{$\x_1$}]				{};
  \node[gridpoint]         (x2)		at (1,1)	[label=above right:{$\x_2$}]		{};
  \node[gridpoint]         (x3)		at (0,1)	[label=above:{$\x_3$}]				{};
  \node[gridpoint]         (x4)		at (-1,1)	[label=above left:{$\x_4$}]			{};
  \node[gridpoint]         (x5)		at (-1,0)	[label=left:{$\x_5$}]				{};
  \node[gridpoint]         (x6)		at (-1,-1)	[label=below left:{$\x_6$}]			{};
  \node[gridpoint]         (x7)		at (0,-1)	[label=below:{$\x_7$}]				{};
  \node[gridpoint]         (x8)		at (1,-1)	[label=below right:{$\x_8$}]		{};

  \path (x)		edge	[thick]		node {}		(x1)
				edge	[thick]		node {}		(x2)
				edge	[thick]		node {}		(x3)
				edge	[thick]		node {}		(x4)
				edge	[thick]		node {}		(x5)
				edge	[thick]		node {}		(x6)
				edge	[thick]		node {}		(x7)
				edge	[thick]		node {}		(x8);
  \path (x1)	edge	[thick]		node {}		(x2)
				edge	[thick]		node {}		(x8);
  \path (x3)	edge	[thick]		node {}		(x4)
				edge	[thick]		node {}		(x2);
  \path (x5)	edge	[thick]		node {}		(x6)
				edge	[thick]		node {}		(x4);
  \path (x7)	edge	[thick]		node {}		(x8)
				edge	[thick]		node {}		(x6);

  \draw[->]		(0,0)	--	(2,1)	node[below] {$\ba$}; 

  \node[interpoint]       (xtilde)	at (1,0.5)		{};
  \node								at (1.1,0.4)	{$\xtilde$};	
\end{tikzpicture}
\\
A & B
\end{array}
$
}
\caption{
{\footnotesize
Two simple stencils using 4 nearest neighbors (A) or 8 nearest
neighbors (B) on a uniform Cartesian grid.
}
}
\label{fig:Two_Stencils}
\end{figure}
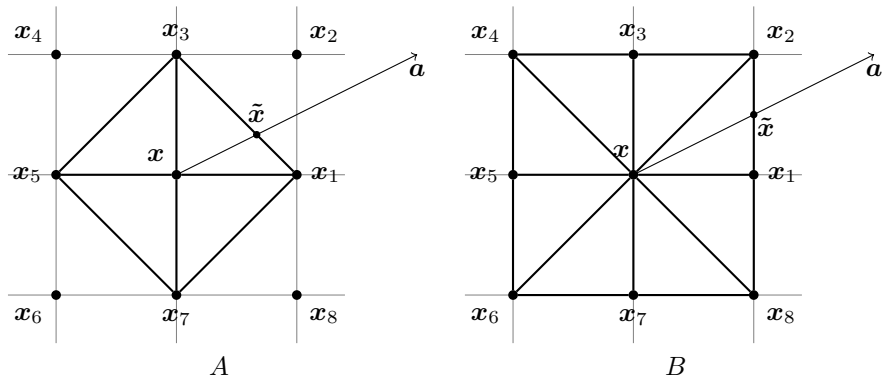

We will enumerate all quadrants as follows 
$\M(\x) = \{ 
(\x_1, \x_3), \, 
(\x_3, \x_5), \, 
(\x_5, \x_7), \, 
(\x_7, \x_1), \}$. 
For any $(\z^m_1, \z^m_2) \in \M$ and any $\xi \in \Xi_2$
we can similarly denote
$$
\xtilde_{\xi} = \xi_1 \z^m_1 + \xi_2 \z^m_2;
\qquad
\tau(\xi) = \|\xtilde_{\xi} - \x\|;
\qquad
\ba_{\xi} = (\xtilde_{\xi} - \x) / \tau(\xi).
$$
We can now state a semi-Lagrangian discretization of the PDE (\ref{eq:HJB}):
\begin{eqnarray}
\label{eq:Markov_aniso}
U(\x) & = &
\min\limits_{(\z^m_1, \z^m_2) \in \M(\x)} \,
\min\limits_{\xi \in \Xi_2}
 \left\{ 
\frac{\tau(\xi)}{f(\x, \ba_{\xi})}
\, + \, \left( \xi_1 U(\z^m_1) + \xi_2 U(\z^m_2) \right)
\right\},  \quad \text{ for } \forall \x \in X \cap \domain;\\
\nonumber
U(\x) &=& q(\x),
  \quad \text{ for } \forall \x \in X \cap \boundary.
\end{eqnarray}
This (fully deterministic) derivation is similar to the one used by Gonzales
and Rofman in \cite{GonzalezRofman}.  

On the other hand, it is easy to see
that this system of equations also describes the value function for 
an MSSP on $X \cup \{\T\}$:
\begin{itemize} 
\item
for the nodes $\x \in X \cap \boundary$, there is a single (deterministic)
transition to $\T$ with the cost $q(\x)$;
\item
for the nodes $\x \in X \cap \domain$,
the set of quadrants $\M(\x)$ can be interpreted as a set of modes 
and
$C^m(\x, \xi) = \tau(\xi) \, / \, f(\x, \ba_{\xi}).$
\end{itemize}
This interpretation is in the spirit of Kushner's and Dupuis' approach of
approximating continuous optimal control by controlled Markov processes
\cite{KushnerDupuis}.

On a uniform Cartesian grid and the stencil of Figure \ref{fig:Two_Stencils}A, 
we can express $\tau(\xi) = h \sqrt{\xi_1^2 + \xi_2^2}$,
where $h$ is the grid size.  If the problem is isotropic, the cost function becomes
$C(\x, \xi) = (h / f(\x)) \sqrt{\xi_1^2 + \xi_2^2}.$  
A similar construction in $\R^n$ leads to modes containing $n$ neighbor-nodes each
and the cost function
\begin{equation}
C(\x, \xi) = \frac{h}{f(\x)} \sqrt{\sum\limits_{i=1}^{n} \xi_i^2}.  
\label{eq:Tsi_cost1}
\end{equation}
This function is homogeneous of degree one in terms of $\xi$; moreover, 
$
\frac{\partial C}{\partial \xi_j}(\x, \xi) = \frac{h^2}{f(\x)} \frac{\xi_j}{\tau(\xi)},
$
which is positive if and only if $\xi_j > 0$.
By the Theorem \ref{thm:homogen}, each mode is absolutely causal
and a Dijkstra-like method can be used to solve the problem.
This is in fact the first of two methods introduced by Tsitsiklis in 
\cite{Tsitsiklis}.
Since this $C$ is also convex in $\xi$, 
the Remark \ref{rem:sufficiency} shows that the modes are not
absolutely $\delta$-causal for any $\delta > 0$; hence, the Dial's method 
is generally not applicable.

Another obvious computational stencil in $\R^2$ uses all 8 neighboring gridpoints
as shown in Figure \ref{fig:Two_Stencils}B.  Here the optimal trajectory is still assumed to
remain a straight line until the intersection with a segment,
but the list of segments is different:
$$
\M(\x) = \left\{ 
(\x_1, \x_2), \, 
(\x_3, \x_2), \, 
(\x_3, \x_4), \, 
(\x_5, \x_4), \, 
(\x_5, \x_6), \, 
(\x_7, \x_6), \, 
(\x_7, \x_8), \, 
(\x_1, \x_8) 
\right\}.
$$
The discretized equation (\ref{eq:Markov_aniso}) still holds,
but the difference is that
$$
\tau(\xi) = \|\xtilde_{\xi} - \x\| = 
\|(\xi_1 \z^m_1 + \xi_2 \z^m_2) - \x\| = h \sqrt{1 + \xi_2^2}.
$$
If the problem is isotropic, the cost function becomes
$C(\x, \xi) = (h / f(\x)) \sqrt{1 + \xi_2^2}.$ 
Theorem \ref{thm:second_deriv_bound} is certainly applicable, 
but instead we re-write the above as a function homogeneous 
of degree one (see Remark \ref{rem:homogen_extend}):
$C(\x, \xi) = (h / f(\x)) \sqrt{(\xi_1 + \xi_2)^2 + \xi_2^2}.$ 
We now note that
$$
\frac{\partial C}{\partial \xi_1}(\x, \xi) = 
\frac{h^2}{f(\x)} \frac{1}{\tau(\xi)};
\qquad
\frac{\partial C}{\partial \xi_2}(\x, \xi) = 
\frac{h^2}{f(\x)} \frac{1 + \xi_2}{\tau(\xi)} 
\geq \frac{\partial C}{\partial \xi_1}(\x, \xi).
$$
Since $\tau(\xi) \leq h \sqrt{2}$, we conclude that
\begin{equation}
\frac{\partial C^m}{\partial \xi_j}(\x, \xi) \geq 
\frac{h}{F_2 \sqrt{2}} = \delta  > 0,
\qquad
\text{for }
\forall \x \in X \cap \domain,
\forall m \in \M(\x),
\forall \xi \in \Xi_2, \,
j=1,2.
\label{eq:Tsi2_delta}
\end{equation}
By the Theorem \ref{thm:homogen}, each mode is absolutely $\delta$-causal
and a Dial-like method can be used with buckets of width $\delta$ 
(corresponding to the second method introduced in \cite{Tsitsiklis}).

More generally,  suppose that $X$ is a simplicial mesh on $\cdomain \subset \R^n$
with the minimum edge-length of $h$.  Let $S(\x)$ be the set of all simplexes
in the mesh that use $\x$ as one of the vertices.  Each such simplex $s \in S(\x)$
corresponds to a single mode $m \in \M(\x)$ consisting of all other  vertices of $s$
besides $\x$.
For any mode $m = (\z^m_1, \ldots, \z^m_n)$ and any $\xi \in \Xi_n$ we can similarly define
$$
\xtilde^m_{\xi} = \sum\limits_{i=1}^{n} \xi_i \z^m_i;
\qquad
\tau^m(\xi) = \|\xtilde^m_{\xi} - \x\|;
\qquad
\ba^m_{\xi} = (\xtilde^m_{\xi} - \x) / \tau^m(\xi).
$$

Since 
$
\tau^m(\xi) = 
\sqrt{\sum\limits_{i=1}^{n} \sum\limits_{k=1}^{n} \xi_i \xi_k (\z^m_i - \x)^T (\z^m_k - \x)},
$ 
we see that 
$$
\frac{\partial \tau^m}{\partial \xi_j}(\xi) =
\frac{
(\z^m_j - \x)^T
\left(
\sum\limits_{i=1}^{n} \xi_i (\z^m_i - \x)
\right)
}
{\tau^m(\xi)} 
= 
\frac{
(\z^m_j - \x)^T
(\xtilde^m_{\xi} - \x)
}
{\tau^m(\xi)} 
=
(\z^m_j - \x)^T \ba^m_{\xi}
= \|\z^m_j - \x\| \cos \beta_{\xi, j},
$$
where $\beta_{\xi, j}$ is the angle between
$\ba^m_{\xi}$ and $(\z^m_j - \x)$.
Suppose $\beta(\x, m)$ is the maximum angle between a pair of vectors 
$(\z^m_k - \x)$ and $(\z^m_i - \x)$ maximizing over all 
$i,k \in \{1, \ldots, n \}.$  Furthermore, define
$$
\beta(\x) = \max\limits_{m \in \M(\x)} \beta(\x, m);
\qquad
\beta = \max\limits_{\x \in X \cap \domain} \beta(\x).
$$
Since $\ba^m_{\xi}$ lies in the cone defined by 
$(\z^m_1 - \x), \ldots, (\z^m_n - \x)$, 
we know that $\beta_{\xi, j} \leq \beta(\x, m) \leq \beta(\x) \leq \beta.$
Therefore,
$$
(\beta < \frac{\pi}{2})
\qquad
\Longrightarrow
\qquad
\frac{\partial \tau^m}{\partial \xi_j}(\xi) = 
\|\z^m_j - \x\| \|\ba^m_{\xi}\| \cos \beta_{\xi, j} \geq h \cos \beta > 0.
$$
The dynamic programming equations in this case become
\begin{eqnarray}
\label{eq:Markov_aniso_mesh}
U(\x) & = &
\min\limits_{m \in \M(\x)} \,
\min\limits_{\xi \in \Xi_n}
 \left\{ 
\frac{\tau^m(\xi)}{f(\x, \ba^m_{\xi})}
\, + \, \left( \sum\limits_{i=1}^{n} \xi_i U(\z^m_i) \right)
\right\},  \quad \text{ for } \forall \x \in X \cap \domain;\\
\nonumber
U(\x) &=& q(\x),
  \quad \text{ for } \forall \x \in X \cap \boundary.
\end{eqnarray}
The cost function 
$C^m(\x, \xi) = \tau^m(\xi) \, / \, f(\x, \ba^m_{\xi})$
is homogeneous of degree one in $\xi$.
For the isotropic case,
we see that
$$
\frac{\partial C^m}{\partial \xi_j}(\x, \xi) = 
\frac{1}{f(\x)}\frac{\partial \tau^m}{\partial \xi_j}(\xi)
= \frac{\|\z^m_j - \x\| \cos \beta_{\xi, j}}{f(\x)}.
$$
Thus, for the Eikonal PDE on any acute mesh (i.e., for $\beta < \frac{\pi}{2}$), 
each mode of the discretization is absolutely causal 
by Theorem \ref{thm:homogen} and a Dijkstra-like method is applicable
(this is a re-derivation of the result in \cite[Appendix]{SethVlad3}).
Moreover, if $\beta < \frac{\pi}{2}$ then 
$
\frac{\partial C^m}{\partial \xi_j}(\x, \xi) \geq
\frac{h \cos \beta}{F_2} = \delta > 0.
$
This provides the optimal bucket-width $\delta$ to use in a Dial-like
method when solving the Eikonal PDE on any acute mesh.  As far as we know,
no general formula for $\delta$ has been derived elsewhere up till now.

We note that the last result is applicable even in a more general 
situation, when the computational stencil $S(\x)$ does not correspond
to a set of non-overlapping simplexes present in the mesh.  E.g.,
for the example in Figure \ref{fig:Two_Stencils}B, 
$\, \beta = \pi/4$ and this yields the same $\delta$ as 
in (\ref{eq:Tsi2_delta}).
That leads to an interesting dilemma:  including more nearby nodes
into a computational stencil usually decreases $\beta$ and increases
the bucket-width thus reducing the total number of ``bucket-acceptance'' 
steps
until the termination of Dial's algorithm.  On the other hand, a larger
$S(\x)$ increases both the computational complexity of a single step
(more tentative labels to update after each acceptance) and
the discretization error (proportional to $h$ in the above examples).
Finding an optimal way for handling this trade-off, could
further speed-up non-iterative methods for Eikonal PDEs on acute meshes.
We note that $h/F_2$ remains the upper bound for $\delta$ and corresponds to
the situation when the vehicle is allowed to move only along the directions
$(\z^m_j - \x)$.

A much harder question is the applicability of label-setting methods
to semi-Lagrangian discretizations of anisotropic optimal control problems.
It is well-known that equations (\ref{eq:Markov_aniso_mesh}) are 
generally not causal; this issue is discussed in detail in 
\cite{SethVlad3, Vlad_Thesis}.
On uniform Cartesian grids, the criteria for applicability of a Dijkstra-like 
method to anisotropic problems were previously provided in \cite{SethBook2},
\cite{OsherFedkiw_tutorial}, and more recently in \cite{AltonMitchell_TR}.  
All of these criteria are grid-orientation dependent;
i.e., given a Hamilton-Jacobi PDE, its semi-Lagrangian or Eulerian 
discretization may or may not be computed correctly by a Dijkstra-like
method depending on whether the anisotropy in the PDE is aligned 
with the grid directions.
Here we provide a criterion for applicability of a Dijkstra-like method for
discretizations 
based on arbitrary acute stencils.
In the anisotropic case,
$$
\frac{\partial C^m}{\partial \xi_j}(\x, \xi) = 
\frac{
f(\x, \ba_\xi) \frac{\partial \tau^m}{\partial \xi_j}(\xi)
\, - \,
\tau^m(\xi) \frac{\partial f}{\partial \xi_j}(\x, \ba_\xi)}
{f^2(\x, \ba_\xi)}.
$$
Suppose that there exists $\delta \geq 0$
such that for 
$
\forall \x \in X \cap \domain, \,
\forall m \in \M(\x), \,
\forall \xi \in \Xi_n, \,
\forall j \in \{ 1,\ldots, n \}
$
$$
(\xi_j > 0)
\qquad \Longrightarrow \qquad
\frac{\partial f}{\partial \xi_j}(\x, \ba_\xi) < 
\frac{
f(\x, \ba_\xi) 
\left[
\frac{\partial \tau^m}{\partial \xi_j}(\xi)
\, - \, \delta f(\x, \ba_\xi)
\right]
}
{\tau^m(\xi)}.
$$
By the Theorem \ref{thm:homogen}, this implies that
a Dijkstra-like method will be applicable and 
a Dial-like method will also be applicable if $\delta >0$.
Building label-setting methods based on this sufficient condition
could potentially yield algorithms outperforming 
the Ordered Upwind Methods specially designed to restore the
causality of anisotropic problems by dynamically extending 
the stencil \cite{SethVlad2, Vlad_Thesis, SethVlad3}.
We intend to explore this approach in the future work.

\section{Conclusions.}
\label{s:conclusions}
We defined a large class of Multimode Stochastic Shortest Path problems
and derived a number of sufficient conditions to check the applicability
of the label-setting methods. We illustrated the usefulness of our approach
to the numerical analysis of first-order non-linear boundary value problems by
reinterpreting previous label-setting methods for the Eikonal PDE
on Cartesian grids.  For Eikonal equation on arbitrary meshes, we re-interpreted 
the prior Dijkstra-like method and derived the new formula of bucket-width
for Dial-like methods.
We also developed a new sufficient condition for the applicability of 
label-setting methods to anisotropic Hamilton-Jacobi PDEs on arbitrary stencils.

In practice, the applicability of label-setting methods to a particular SSP
can be tested directly in $O(M)$ operations: 
upon the method's termination, a single value iteration can be applied
and, if it results in no changes, the value function was computed correctly.
However, the sufficient conditions (presented above for MSSPs) allow
to avoid these additional computations.

Unfortunately, the framework of MSSPs is not flexible enough to express many common 
discrete stochastic control problems, where not all possible
probability distributions over successor nodes are available.
Nevertheless, we hope that the key idea of our approach (splitting the original
MSSP into a number of absolutely causal auxiliary problems) can be generalized
to test the applicability of label-setting methods to other SSPs.
Since SSPs can be naturally extended to describe stochastic games on graphs 
\cite{Patek_Berts}, we also intend to investigate the applicability of our 
approach to the latter.  
If successful, this will potentially yield 
efficient numerical methods for a wide class of first and second order static 
Hamilton-Jacobi equations.  

In Dial-like methods, the bucket width can be sometimes adjusted on the
fly based on the not-yet-accepted part of the problem only.  
We expect such extensions to be advantageous for problems, where 
the cost function $C$ has very different lower bounds for different nodes.
Another open question of practical 
importance is the use of label-setting methods to obtain an {\em approximation}
of the value function for non-causal SSPs.  
Recently, a numerical method based on a related idea was introduced 
in \cite{Sapiro_buckets} for Eikonal PDEs :
a Dial-like method is used with buckets of width $\delta$ for a discretization
that is not $\delta$-causal.  This introduces additional errors (analyzed
in \cite{RaschSatzger}), but decreases the method's running time.

Finally, the performance comparison of label-setting and label-correcting 
methods on MSSPs is a yet another interesting topic for the future research.

\acknowledgments{The author would like to thank B. Van Roy, M.J. Todd, and D. Shmoys.
The author's research is supported by 
the National Science Foundation grants DMS-0514487 and CTS-0426787.}

\small

\end{document}